\newcommand{\brac}[1]{\left({#1}\right)}
\newcommand{\sbrac}[1]{\left[{#1}\right]}
\newcommand{\cbrac}[1]{\left\{{#1}\right\}}
\newcommand{\indic}[1]{\mathbbm{1}_{\cbrac{#1}}}
\newcommand{\expect}[2][]{\mathbb{E}_{#1}\left[{#2}\right]}
\newcommand{\prob}[2][]{\mb{P}_{#1}\brac{#2}}
\newcommand{\p}{\mathbb{P}}
\newcommand{\e}{\mathbb{E}}
\newcommand{\UN}[1]{{\mathcal{U}}^{(N)}_{j}}
\newcommand{\mb}[1]{\mathbb{#1}}
\newcommand{\floor}[1]{\left\lfloor{#1}\right\rfloor}
\begin{document}

\title{Voter and Majority Dynamics with Biased and Stubborn Agents
}

\titlerunning{Opinion Dynamics with Biased and Stubborn Agents}        

\author{Arpan~Mukhopadhyay         \and
        Ravi~R.~Mazumdar \and
        Rahul~Roy 
}

\authorrunning{Mukhopadhyay et al.} 

\institute{A.~Mukhopadhyay \at
              Department of Computer Science, UK\\
              University of Warwick \\
              \email{arpan.mukhopadhyay@warwick.ac.uk}           
           \and
           R.~R.~Mazumdar \at
           Department of Electrical and Computer Engg.\\
           University of Waterloo, Canada\\
           \email{mazum@uwaterloo.ca}
           \and
           R.~Roy \at
           Theoretical Statistics and Mathematics Unit\\
           Indian Statistical Institute, Delhi, India\\
           \email{rahul@isid.ac.in}
}

\date{Received: date / Accepted: date}

\maketitle

\begin{abstract}
We study binary opinion dynamics in a fully connected network of interacting agents. 
The agents are assumed to interact according to one of the following rules: (1) Voter rule:
An updating agent simply copies the opinion
of another randomly sampled agent;
(2) Majority rule: An updating agent samples multiple agents
and adopts the majority opinion 
in the selected group. 
We focus on the scenario where the agents are biased towards one of the opinions called the {\em preferred opinion}.
Using suitably constructed branching processes, we show that under both rules 
the mean time to reach consensus is $\Theta(\log N)$, where $N$ is the number of agents in the network.
Furthermore, under the majority rule model, we show
that consensus can be achieved on the preferred opinion with high probability even if 
it is initially the opinion of the minority. 
We also study the majority rule model when stubborn agents with fixed opinions are present.
We find that the stationary distribution of opinions in the network in the large system limit
using mean field techniques.
\keywords{Opinion dynamics \and Consensus \and Voter model \and Majority rule \and Mean field \and Metastability \and Branching processes}
\end{abstract}

\section{Introduction}

The problem of social learning \cite{Ellison_JPE_1993,Chatterjee_AAP_2004}
is concerned with the rate at which social agents,
interacting under simple rules, can learn/discover the true utilities
of	 their choices, opinions or technologies. 
In this context, the two central questions we study are:
(1) Can social agents learn/adopt the better technology/opinion through simple rules
of interactions and if so, how fast? and (2) What are the effects of the presence of stubborn agents (having fixed
opinions) on the dynamics opinion diffusion?

We consider a setting 
where the choices available to each agent are binary and are represented by $\cbrac{0}$ and $\cbrac{1}$~\cite{Chatterjee_AAP_2004, Bandyopadhyay_EJP_2010}.
These are referred to as {\em opinions} of the agents.
The interactions among the agents are modelled using 
two simple rules: the {\em voter rule}~\cite{Ligett_voter_model, Sudbury_voter_model,Cox_voter_model} 
and the {\em majority rule}~\cite{Krapivsky_majority,Redner_majority,Chen_majority,ganesh_consensus}.
In the voter rule, an agent randomly samples one of its neighbours
at an instant when it decides to update its opinion. The updating agent 
then adopts the opinion of the sampled neighbour. 
This simple rule captures the tendency of an individual to
mimic other individuals in the society. 
In the majority rule, instead of sampling a single agent, an updating agent samples $2K$ ($K \geq 1$)
neighbours and adopts the opinion of the majority of the sampled neighbours (including itself).
This rule captures the tendency of the individuals to
conform with the majority opinion in their local neighbourhoods.

{\bf Related literature}: The voter model and its variants have been studied extensively (see \cite{emanuele_review} for a recent survey)
for different network topologies, e.g., finite integer lattices in different dimensions
~\cite{Cox_voter_model, Krapivsky_voter}, complete graphs with three states~\cite{three_state_voter_vojnovic}, heterogeneous graphs~\cite{Redner_voter},
random $d$-regular graphs~\cite{cooper_voter},
Erdos-Renyi random graphs, and random geometric graphs~\cite{Yildiz_ITA} etc.
It is known~\cite{peleg_voter,nakata_voter} that if the underlying graph is connected, then the classical voter rule leads
to a consensus where all agents adopt the same opinion. Furthermore, if $A$ is the set of 
all agents having an opinion $i\in \cbrac{0,1}$ initially, then the probability that consensus 
is achieved on opinion $i$ (referred to as the {\em exit probability} to opinion $i$)
is given by $d(A)/2m$, where $d(A)$ is the sum of the degrees
of the vertices in $A$ and $m$ is the total number of edges in the graph. 
It is also known that for most network topologies
the mean consensus time is $\Omega(N)$, where $N$ is the total number of agents. 
In~\cite{Mobilia_voter_stubborn,Yildiz_voter_stubborn}, the voter model was studied  
under the presence of stubborn individuals who do not update
their opinions.
In such a scenario, the network
cannot reach a consensus because of the presence
of stubborn agents having both opinions. 
Using coalescing random walk techniques
the average opinion in the network and the variance
of opinions were computed at steady state. 

The majority rule model was first studied in~\cite{Galam_majority}, where
it was assumed that, at every iteration, groups
of random sizes are formed by the agents. Within each group,
the majority opinion is adopted by all the agents. 
Similar models with fixed (odd) group size have been
considered in~\cite{Krapivsky_majority, Redner_majority}.
A more general majority rule based model is analysed in~\cite{ganesh_consensus} for complete graphs.
It has been shown that with high probability (probability tending to one as $N \to \infty$) consensus is achieved
on the opinion with the initial majority  and
the mean time to reach consensus time is $\Theta(\log N)$. 
The majority rule model is studied for random $d$-regular graphs on $N$ vertices
in~\cite{majority_regular_collin}. It is shown that when the initial 
imbalance of between the two opinions is above $c\sqrt{1/d+d/N}$, for some constant $c>0$,
then consensus is achieved in $O(\log N)$ time 
on the initial majority opinion with high probability.
A deterministic version of the majority rule model,
where an agent, instead of randomly sampling some of its neighbours,
adopts the majority opinion among all its neighbours, 
is considered in~\cite{Mossel_majority_new,Flocchini_majority_new,Moran_majority_new,AGUR1_majority}.
In such models, 
given the graph structure of the network,
the opinions of the agents at any time is a deterministic
function of the initial opinions of the agents.
The interest there is to find out the initial distribution
of opinions for which the network converges to some specific absorbing
state.

{\bf Contributions}: In all the prior works on the voter and the majority rule models, 
it is assumed that opinions or technologies are indistinguishable.
However, in a social learning model, one opinion/technology
may be inherently `better' than the other leading to more utility to individuals
choosing  the better option in a round of update. As a result, individuals
currently using the better technology will update less frequently than individuals with the worse technology.
To model this scenario, we assume that an agent having opinion $i \in \cbrac{0,1}$
performs an update with probability $q_i$. By choosing $q_1 < q_0$, we  make the agents `biased'
towards the opinion $\cbrac{1}$, which is referred to as the {\em preferred opinion}.
We study the opinion dynamics under both voter and majority rules when the agents are biased.
We focus on the case where the underlying graph is complete which closely models situations where agents are mobile 
and can therefore sample any other agent depending on their current neighbourhood.

For the voter model with biased agents, 
we show  that the probability of reaching consensus on the non-preferred
opinion decreases exponentially
with the network size.
Furthermore,  the mean  consensus time
is shown to be logarithmic in the network size. 
This is in sharp contrast to the voter model with unbiased agents where the probability of reaching consensus
on any opinion remains constant and the mean consensus time grows linearly with the network size.
Therefore, in the biased voter model consensus is achieved exponentially faster than that in the unbiased voter model.

For the majority rule model with biased agents, we show
that the network reaches consensus on the preferred opinion with high probability only if the initial fraction
of agents with the preferred opinion is above a certain threshold determined by the biases of the agents. 
Furthermore, as in the voter model,
the mean consensus time is shown to be logarithmic in the network size.
Our results generalise existing results on the majority rule model with unbiased agents 
where it is known that consensus is achieved (with high probability) on the opinion with the initial majority
and the mean consensus time is logarithmic in the network size. 
However, existing proofs for the unbiased majority rule model~\cite{Krapivsky_majority,ganesh_consensus}
cannot be extended to the biased case as they crucially rely on the fact that opinions are indistinguishable.
We use suitably constructed branching processes and 
monotonicity of certain rational polynomials to prove the results for the biased model.

We also study the majority rule model in the presence
of  agents having fixed opinions at all times.
These agents are referred to as 'stubborn' agents.
A similar study of the voter model in the presence of stubborn agents was done in~\cite{Yildiz_voter_stubborn}.
In presence of stubborn agents, the network cannot reach a consensus state. The key objective, therefore,
is to study the stationary distribution of opinions among the non-stubborn agents. 
In \cite{Yildiz_voter_stubborn}, coalescing random
random walk techniques were used to study this stationary distribution of opinions.
However, such techniques do not apply to majority rule dynamics. We analyse the network dynamics in the large scale limit 
using mean field techniques. In particular, we show that depending on the proportions of stubborn agents the mean field can either have
single or multiple equilibrium points. If multiple equilibrium points are present, the network shows {\em metastability}
in which it switches between stable configurations spending long time in each configuration.

An earlier version of this work~\cite{arpan_opinion_ITC}, 
contained some of the results of this paper and an analysis of the majority rule model for $K=1$.
However, only sketches of the proofs were provided.
In the current paper, we provide
rigorous proofs of all results and a more general analysis of the majority rule model (for $K \geq 1$).

{\bf Organisation}: The rest of the paper is organised as follows.
In Section~\ref{sec:voter_models}, we introduce
the model with biased agents. In Sections~\ref{sec:voter_asymmetric}
and~\ref{sec:majority_biased}, we state the main results for the voter model and the majority rule model
with biased agents, respectively. Section~\ref{sec:majority_stubborn} analyses the majority rule
model with stubborn agents. In Sections~\ref{pf:voter_consensus}-\ref{pf:majority_consensus}, we provide
the detailed proofs the main results on voter and majority rule models with biased agents.
Finally, the paper is concluded in Section~\ref{sec:opinion_conclusion}.

\section{Model with biased agents}
\label{sec:voter_models}

We consider a network of $N$ social agents. 
Opinion of each agent is assumed to be a binary variable
taking values in the set $\{0, 1\}$.  
Initially, every agent adopts one of the two opinions.
Each agent considers updating its opinion at points of 
an independent unit rate Poisson point process associated with itself. 
At a point of the Poisson process associated with itself, an agent
either updates its opinion or retains its past opinion.
We assume that an agent with opinion $i \in \cbrac{0,1}$
updates its opinion at a point of the unit rate Poisson
process associated with itself with probability $q_i \in (0,1)$ and 
retains its opinion with probability $p_i=1-q_i$.
To make the agents `biased' towards opinion $\cbrac{1}$ we assume
that $q_0 > q_1$ which implies that an agent with opinion $\cbrac{1}$
updates its opinion less frequently than an agent with opinion $\cbrac{0}$.

In case the agent decides to update its opinion, it 
does so either using the voter rule or under the majority rule.
In the voter rule, an updating agent samples 
an agent uniformly at random from $N$ agents 
(with replacement) from the network\footnote{In the large $N$ limit 
sampling with or without replacement leads to the same results.}
and adopts the opinion of the sampled agent.
In the majority rule, an updating agent samples $2K$ agents ($K \geq 1$)
uniformly at random (with replacement) and adopts the opinion of the majority
of the $2K+1$ agents including itself.
The results derived in this paper
can be extended to the case where the updating agent samples an agent 
from a random group of size $O(N)$. 
However, for simplicity we only focus on the case where sampling occurs from the whole population.

%


\section{Main results for the voter model with biased agents}
\label{sec:voter_asymmetric}


We first consider the voter model with biased agents.
In this case,
clearly, the network reaches consensus in a finite time with probability 1. 
Our interest is to find out the probability with which consensus is achieved 
on the preferred opinion $\{1\}$. This is referred to as the 
{\em exit probability} of the network.
We also intend to characterise
the average time to reach the consensus. 

The case $q_1=q_0=1$ is referred to
as the voter model with unbiased agents, which has been analysed in~\cite{Ligett_voter_model,Sudbury_voter_model}.  
It is known that for unbiased agents
the  probability  with which consensus is
reached on a particular opinion is simply equal to the initial fraction $\alpha$
of agents having that opinion  and the expected time to reach consensus
for large $N$ is approximately given by $N h(\alpha)$, where
$h(\alpha)=-[\alpha \ln(\alpha)+(1-\alpha)\ln(1-\alpha)]$.
We now proceed to characterise these quantities for the 
voter model with biased agents.

Let $X^{(N)}(t)$
denote the number of agents
with opinion $\{1\}$ at time $t \geq 0$. 
Clearly, $X^{(N)}(\cdot)$ is a Markov process on state space $\{0,1,\ldots,N\}$,
with absorbing states $0$ and $N$.
The transition rates from state $k$ are given by

\begin{align}
q(k\to k+1)&=q_0 k \frac{N-k}{N},\\
q(k\to k-1)&=q_1 k \frac{N-k}{N},
\end{align}
where $q(i \to j)$ denotes the rate of transition from state $i$ to state $j$.
The embedded discrete-time Markov chain $\tilde{X}^{(N)}$
for $X^{(N)}$ is a one-dimensional random walk on $\cbrac{0,1,\ldots,N}$
with jump probability of $p=q_0/(q_0+q_1)$ to the right and
$q=1-p$ to the left. We define $r=q/p <1$ and $\bar r=1/r$.
Let $T_k$ denote the first hitting time of state $k$, i.e.,

\begin{equation}
T_k=\inf\cbrac{t\geq 0: X^{(N)}(t) \geq k},
\end{equation}
We are interested in the asymptotic behaviour of the quantities
$E_N(\alpha):=\p_{\floor{\alpha N}}\brac{T_N < T_0}$
and $t_N(\alpha)=\e_{{\floor{\alpha N}}}\sbrac{T_0 \wedge T_N}$, where $\p_x\brac{\cdot}$
and $\e_x\sbrac{\cdot}$, respectively denote the probability measure and expectation
conditioned on the event $X^{(N)}(0)=x$.
To characterise the above quantities we require the following lemma which
follows from the gambler ruin identity for one-dimensional asymmetric random walks \cite{Spitzer_book}.

\begin{lemma}
\label{lem:stop}
For $0 \leq a < x < b \leq N$, we have

\begin{equation}
\p_x\brac{T_a < T_b}=\frac{r^x-r^b}{r^a-r^b}.
\end{equation}
\end{lemma}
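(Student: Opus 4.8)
The plan is to derive the identity directly from the standard gambler's ruin calculation for the embedded chain $\tilde{X}^{(N)}$, since the hitting times $T_a, T_b$ depend only on the sequence of states visited and not on the (state-dependent) holding times. First I would observe that $\p_x(T_a < T_b)$ equals the corresponding probability for the discrete-time random walk that moves right with probability $p = q_0/(q_0+q_1)$ and left with probability $q = 1-p$, because the continuous-time process $X^{(N)}$ only makes $\pm 1$ jumps and the event $\{T_a < T_b\}$ is measurable with respect to the jump chain. This reduction is the conceptual step; everything after it is the classical computation.

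Next I would set $h(x) := \p_x(T_a < T_b)$ for $a \le x \le b$ and write down the one-step recurrence $h(x) = p\, h(x+1) + q\, h(x-1)$ for $a < x < b$, with boundary conditions $h(a) = 1$ and $h(b) = 0$. The general solution of this linear recurrence, since $p \ne q$ (here $r = q/p < 1$ strictly because $q_0 > q_1$), is $h(x) = A + B r^x$ for constants $A, B$. Imposing $h(a) = 1$ and $h(b) = 0$ gives $A + B r^a = 1$ and $A + B r^b = 0$, so $B = 1/(r^a - r^b)$ and $A = -r^b/(r^a - r^b)$, yielding
\begin{equation}
h(x) = \frac{r^x - r^b}{r^a - r^b},
\end{equation}
which is exactly the claimed formula. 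One could alternatively quote this verbatim from the gambler's ruin identity in Spitzer's book, as the statement suggests.

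I do not expect a genuine obstacle here; the only point requiring a word of care is the justification that the holding times are irrelevant — i.e.\ that $X^{(N)}$ visits states $a$ and $b$ with the same probabilities as its embedded chain. This follows because from any state $k$ with $0 < k < N$ the total jump rate $q(k \to k+1) + q(k \to k-1) = (q_0+q_1) k (N-k)/N$ is strictly positive and finite, so the process is non-explosive on the interior and the embedded chain is well defined with the stated transition probabilities; for $a \le x \le b$ with $a \ge 1$ and $b \le N$ the walk stays in $\{1,\dots,N\}$ until absorption, and on $\{0, N\}$ the value of $h$ is fixed by the boundary conditions anyway. A secondary minor check is that the formula behaves correctly in the degenerate cases $x = a$ and $x = b$, which it does by inspection.
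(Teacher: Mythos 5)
Your proof is correct and follows essentially the same route as the paper, which simply reduces to the embedded chain and invokes the gambler's ruin identity for the asymmetric random walk (citing Spitzer); you merely spell out the standard difference-equation derivation with boundary conditions $h(a)=1$, $h(b)=0$ that the paper leaves to the reference. The reduction from the continuous-time process to the jump chain, which you justify via the positive finite jump rates, is exactly the implicit step in the paper's argument.
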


%
%

%
%

From the above lemma 
it follows that
\begin{align*}
E_N(\alpha)&= \p_{\floor{\alpha N}}\brac{T_N < T_0},\\
				   &=\frac{1-r^{\floor{\alpha N}}}{1-r^N},\\
				   &\geq 1-r^{\floor{\alpha N}}
				   =1-\exp(-cN),
\end{align*}
for some constant $c >0$ (since $r < 1$). Hence,  
the probability of having a consensus on the non-preferred opinion 
approaches $0$ exponentially fast in $N$. This is unlike the voter model
with unbiased agents where the probability of having consensus on either opinion
remains constant with respect to $N$.

The following theorem characterises 
the mean time ${t}_N(\alpha)$ to reach the consensus
state starting from $\alpha$ fraction of agents having opinion $\{1\}$.

\begin{theorem}
\label{thm:voter_consensus}
For all $\alpha \in (0,1)$ we have $t_N(\alpha)=\Theta(\log N)$.
\end{theorem}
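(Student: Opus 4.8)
The plan is to reduce the statement to two‑sided control of the Green's function of the embedded biased random walk $\tilde{X}^{(N)}$ and then sum. Decomposing the sojourn times of $X^{(N)}$ over the jumps of $\tilde{X}^{(N)}$, and using that the holding time in a state $k\in\{1,\dots,N-1\}$ is exponential with rate $(q_0+q_1)k(N-k)/N$ independently of the jump chain, one gets
\begin{equation}
t_N(\alpha)=\sum_{k=1}^{N-1}\frac{\nu_N(k)\,N}{(q_0+q_1)\,k(N-k)},
\end{equation}
where $\nu_N(k)$ is the expected number of visits of $\tilde{X}^{(N)}$ to $k$ before absorption, started from $\floor{\alpha N}$. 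So it suffices to bound $\nu_N(k)$ above uniformly in $k$ and below on an order‑$N$ block of states adjacent to $N$; the $\log N$ scaling then comes from $\sum_{k=1}^{N-1}\frac{N}{k(N-k)}=2\sum_{k=1}^{N-1}\frac1k=\Theta(\log N)$ by partial fractions.

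\emph{Upper bound.} I would couple $\tilde{X}^{(N)}$ with the unrestricted walk $S=(S_n)$ on $\mathbb{Z}$ started at $\floor{\alpha N}$ and driven by the same $\pm1$ increments ($+1$ w.p.\ $p$, $-1$ w.p.\ $q$), so that $\tilde{X}^{(N)}$ agrees with $S$ until $S$ first leaves $\{1,\dots,N-1\}$. Then the number of visits of $\tilde{X}^{(N)}$ to any $k\in\{1,\dots,N-1\}$ is dominated by that of $S$. Since $p>q$, $S$ is transient to $+\infty$: by the strong Markov property the probability that $S$ ever returns to its current level is $p\cdot(q/p)+q\cdot 1=2q<1$ (from one step up, hitting the old level has probability $q/p=r$; from one step down it is certain). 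Hence $\nu_N(k)\le 1/(1-2q)=1/(p-q)$ for all $k$, and
\begin{equation}
t_N(\alpha)\le\frac{1}{(p-q)(q_0+q_1)}\sum_{k=1}^{N-1}\frac{N}{k(N-k)}=\frac{2}{(p-q)(q_0+q_1)}\sum_{k=1}^{N-1}\frac1k=O(\log N).
\end{equation}

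\emph{Lower bound.} For $\floor{\alpha N}\le k\le N-1$, on the event $\{T_N<T_0\}$ the nearest‑neighbour walk $\tilde{X}^{(N)}$ must pass through $k$ on its way from $\floor{\alpha N}$ to $N$, so $\nu_N(k)\ge \p_{\floor{\alpha N}}(T_N<T_0)=\frac{1-r^{\floor{\alpha N}}}{1-r^N}>\tfrac12$ for all $N$ large (as $r<1$ and $\alpha>0$). Using $\frac{N}{k(N-k)}\ge\frac1{N-k}$ and $N-\floor{\alpha N}\ge(1-\alpha)N$,
\begin{equation}
t_N(\alpha)\ge\frac{1}{2(q_0+q_1)}\sum_{k=\floor{\alpha N}}^{N-1}\frac1{N-k}=\frac{1}{2(q_0+q_1)}\sum_{j=1}^{N-\floor{\alpha N}}\frac1j=\Omega(\log N),
\end{equation}
and combining the two bounds gives $t_N(\alpha)=\Theta(\log N)$.

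\emph{Main obstacle.} The only genuinely delicate point is the uniform bound $\nu_N(k)=O(1)$ near the boundary $k=N$: there the holding times are of order $1/(N-k)$, so without control on the number of visits the sum could a priori be $\Theta(N)$ — which is precisely what happens in the unbiased voter model, where $\nu_N(k)$ grows linearly. Transience of the biased walk, i.e.\ $r<1$, is what collapses this to $\Theta(\log N)$. If one prefers to avoid the coupling, $\nu_N(k)$ can instead be written as $\p_{\floor{\alpha N}}(\text{reach }k\text{ before absorption})$ times a geometric factor $1/(1-\rho_N(k))$, with $\rho_N(k)$ the probability of returning to $k$ before absorption, and both factors can be bounded using the gambler's‑ruin identity of Lemma~\ref{lem:stop}; the coupling argument is just shorter.
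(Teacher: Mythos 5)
Your proof is correct, and its skeleton is the paper's: you decompose $t_N(\alpha)=\sum_{k=1}^{N-1}\e_{\floor{\alpha N}}\sbrac{Z_k}\cdot\frac{N}{(q_0+q_1)k(N-k)}$ via Wald's identity, use $\frac{N}{k(N-k)}=\frac1k+\frac1{N-k}$ for the harmonic sum, and get the lower bound from the observation that on $\{T_N<T_0\}$ every state between $\floor{\alpha N}$ and $N$ is visited at least once (the paper additionally uses the complementary event, but since $\p_{\floor{\alpha N}}(T_N<T_0)\to 1$ your one-sided version suffices for large $N$, as you note). Where you genuinely diverge is the key step, the uniform bound $\e_{\floor{\alpha N}}\sbrac{Z_k}=O(1)$. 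The paper conditions on $A=\{T_N<T_0\}$ and on $A^c$, computes the conditioned (Doob-transformed) jump probabilities via Lemma~\ref{lem:stop}, represents the left-jump counts $(\zeta_k)$ as a branching process with unit immigration, and solves it with Wald's identity to obtain the exact value $\e_x\sbrac{Z_k\vert A}=\frac{1+r}{1-r}\frac{(1-r^{N-k})(1-r^k)}{1-r^N}\le\frac{1+r}{1-r}$. You instead dominate the absorbed jump chain pathwise by the unrestricted biased walk on $\mathbb{Z}$ and invoke transience: return probability $2q<1$, hence the Green's function is at most $\frac{1}{p-q}$ — the same constant, since $\frac{1+r}{1-r}=\frac{p+q}{p-q}=\frac{1}{p-q}$. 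Your route is shorter and avoids conditioning altogether, but it exploits the space-homogeneity of the voter jump chain; the paper's conditional branching-process argument is the template that carries over to the majority-rule model (Theorem~\ref{thm:majority_consensus}), where the jump probabilities depend on $n/N$ through $g_K$ and no single homogeneous transient walk dominates the chain. The only point to state explicitly is the domination itself: under the natural coupling the absorbed chain coincides with $S$ until $S$ first exits $\{1,\dots,N-1\}$ and is constant afterwards, so its visits to any interior $k$ form a subset of the visits of $S$, which is exactly the inequality you need.
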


Hence, the above theorem shows that the mean consensus time in the biased
voter model is logarithmic in the network size.
This is in contrast to the voter model with unbiased agents
where the mean consensus time is linear in the network size. Thus, with biased
agents, the network reaches consensus exponentially faster.

We now consider the 
measure-valued process $x^{(N)}=X^{(N)}/N$,
which describes the evolution of the fraction of agents with opinion $\{1\}$.
We show that the following convergence takes place.

\begin{theorem}
\label{thm:mean_field}
If $x^{(N)}(0) \Rightarrow \alpha$, then $x^{(N)}(\cdot) \Rightarrow x(\cdot)$,
where $\Rightarrow$ denotes weak convergence and
$x(\cdot)$ is a deterministic process with initial condition $x(0)=\alpha$
and is governed by the following differential equation

\begin{equation}
\label{eq:mean_field}
\dot{x}(t) = (q_0-q_1)x(t)(1-x(t)),
\end{equation}
\end{theorem}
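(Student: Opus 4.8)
The plan is to establish Theorem~\ref{thm:mean_field} as a standard density-dependent Markov chain limit in the spirit of Kurtz. The process $X^{(N)}(\cdot)$ jumps by $\pm 1$ at rates $q_0 k(N-k)/N$ and $q_1 k(N-k)/N$ from state $k$, so $x^{(N)} = X^{(N)}/N$ jumps by $\pm 1/N$ at rates that, when written in terms of the fraction $y = k/N$, equal $N \cdot q_0 y(1-y)$ and $N \cdot q_1 y(1-y)$. This is exactly the density-dependent scaling: the total jump rate is $\Theta(N)$, each jump has size $1/N$, and the scaled drift is $b(y) := (+1)q_0 y(1-y) + (-1) q_1 y(1-y) = (q_0 - q_1) y(1-y)$, which is precisely the right-hand side of~\eqref{eq:mean_field}. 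So the candidate ODE $\dot x = b(x)$ has a Lipschitz (indeed polynomial, bounded on $[0,1]$) vector field, and existence/uniqueness of the solution with $x(0) = \alpha$ is immediate.

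First I would set up the martingale decomposition: write
\begin{equation*}
x^{(N)}(t) = x^{(N)}(0) + \int_0^t b\big(x^{(N)}(s)\big)\, ds + M^{(N)}(t),
\end{equation*}
where $M^{(N)}$ is a martingale whose predictable quadratic variation is $\int_0^t \frac{1}{N}\big(q_0 + q_1\big) x^{(N)}(s)(1-x^{(N)}(s))\, ds = O(t/N)$. Then I would show $\sup_{t \le T}|M^{(N)}(t)| \to 0$ in probability (or in $L^2$) via Doob's inequality, using that the integrand is bounded by $(q_0+q_1)/(4N)$. Combined with the assumed convergence $x^{(N)}(0) \Rightarrow \alpha$, a Gronwall argument applied to $|x^{(N)}(t) - x(t)|$ against $\int_0^t |b(x^{(N)}(s)) - b(x(s))|\, ds \le L\int_0^t |x^{(N)}(s) - x(s)|\, ds$ (with $L$ the Lipschitz constant of $b$ on $[0,1]$) yields $\sup_{t \le T}|x^{(N)}(t) - x(t)| \to 0$ in probability, which gives the claimed weak convergence on path space. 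Alternatively, one can simply invoke Kurtz's theorem (e.g.\ Ethier--Kurtz, Chapter 11) after verifying the density-dependent form, avoiding the explicit estimates.

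The one genuine technical point — though it is mild — is that $x^{(N)}(\cdot)$ is a jump process living in $D([0,\infty), [0,1])$, so the weak convergence should be stated with respect to the Skorokhod topology; since the limit $x(\cdot)$ is continuous, convergence in Skorokhod $J_1$ to a continuous limit is equivalent to uniform convergence on compacts, so no subtlety remains once the uniform estimate above is in hand. I do not anticipate a real obstacle here: the vector field is globally Lipschitz on the compact state space, the noise is $O(1/\sqrt N)$, and everything is confined to $[0,1]$, so tightness is automatic and the fluid limit identification is routine. The main "work" is just bookkeeping the martingale bound and the Gronwall step, which I would present compactly rather than in full detail.
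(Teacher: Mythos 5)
Your proposal is correct, and its core is the same identification the paper makes: from state $k$ the chain jumps by $\pm 1$ at rates $q_0\,k(N-k)/N$ and $q_1\,k(N-k)/N$, so $x^{(N)}=X^{(N)}/N$ is a density-dependent family with drift $b(y)=(q_0-q_1)y(1-y)$, and the paper's entire proof consists of writing down these transition rates, asserting uniform convergence of generators, and citing Kurtz's classical theorem. The difference is one of explicitness rather than substance: your primary route unpacks what that citation hides, namely the semimartingale decomposition $x^{(N)}(t)=x^{(N)}(0)+\int_0^t b\bigl(x^{(N)}(s)\bigr)\,ds+M^{(N)}(t)$, the bound on the predictable quadratic variation $\langle M^{(N)}\rangle_t \le (q_0+q_1)t/(4N)$, Doob's inequality, and a Gronwall comparison with the Lipschitz vector field on $[0,1]$. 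This buys a self-contained, quantitative argument (fluctuations of order $N^{-1/2}$ uniformly on compacts) and makes the topological point precise — convergence in $D([0,\infty),[0,1])$ with the Skorokhod $J_1$ topology to a continuous limit reduces to uniform convergence on compacts — which the paper leaves implicit. Your fallback option of simply verifying the density-dependent form and invoking Kurtz (Ethier--Kurtz, Chapter 11) is exactly the paper's proof. No gaps; if anything, your version is the more complete write-up.
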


According to the above result, for large $N$, the process $x^{(N)}(\cdot)$
is well approximated by the deterministic process $x(\cdot)$ which is 
generally referred to as the {\em mean field limit} of the system. 
Using the mean field limit, we can
approximate the mean consensus time $t_N(\alpha)$ by the time the process $x(\cdot)$
takes to reach the state $1-1/N$ starting from $\alpha$.

\begin{theorem}
\label{thm:voter_estimate}
\begin{enumerate}
\item For the process $x(\cdot)$ defined by \eqref{eq:mean_field}, we have 
$x(t) {\to} 1$ as $t \to \infty$ for any $x(0) \in (0,1)$.

\item Let $t(\epsilon,\alpha)$ denote the time required by the process $x(\cdot)$
to reach $\epsilon \in (0,1)$ starting from $x(0)=\alpha \in (0,1)$. Then

\begin{equation}
t(\epsilon,\alpha)=\frac{1}{q_0-q_1}\brac{\log \frac{\epsilon}{1-\epsilon}-\ln \frac{\alpha}{1-\alpha}}.
\label{eq:time}
\end{equation}
In particular, for $\epsilon=1-1/N$ we have

\begin{align}
t(1-1/N,\alpha)&=\frac{1}{q_0-q_1} \log(N-1)-\frac{1}{q_0-q_1} \log \brac{\frac{\alpha}{1-\alpha}}.
\label{eq:cons_time_voter_asym}
\end{align}
\end{enumerate}
\end{theorem}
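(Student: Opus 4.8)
The plan is to solve the logistic ODE \eqref{eq:mean_field} in closed form; once that is done, both assertions are immediate. Write $c := q_0 - q_1 > 0$. First I would record that the vector field $x \mapsto c\,x(1-x)$ is $C^1$, hence locally Lipschitz, so \eqref{eq:mean_field} has a unique maximal solution from each initial condition; since $x \equiv 0$ and $x \equiv 1$ are equilibria, uniqueness forces any solution started at $\alpha \in (0,1)$ to remain in $(0,1)$ for all time, where $\dot x = c\,x(1-x) > 0$ makes it strictly increasing. This invariance is the only slightly delicate point, and it is what legitimises separating variables below.

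For part (2), on $(0,1)$ the partial-fraction identity $\frac{1}{x(1-x)} = \frac{1}{x} + \frac{1}{1-x}$ gives
\[
\frac{d}{dt}\log\frac{x(t)}{1-x(t)} \;=\; \frac{\dot x(t)}{x(t)(1-x(t))} \;=\; c ,
\]
so integrating from $0$ and using $x(0)=\alpha$ yields $\log\frac{x(t)}{1-x(t)} = ct + \log\frac{\alpha}{1-\alpha}$, i.e. $x(t) = \alpha e^{ct}/(1-\alpha+\alpha e^{ct})$. Since $x(\cdot)$ is continuous and strictly increasing from $\alpha$ toward its limit, the time $t(\epsilon,\alpha)$ at which $x(\cdot)$ reaches a level $\epsilon$ is obtained by setting $x(t)=\epsilon$ in the identity above and solving, which produces \eqref{eq:time}. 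The special case $\epsilon = 1-1/N$ then follows from $\frac{\epsilon}{1-\epsilon} = N-1$, giving \eqref{eq:cons_time_voter_asym}.

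For part (1), the explicit solution rewrites as $x(t) = \brac{1 + \tfrac{1-\alpha}{\alpha}e^{-ct}}^{-1}$, which tends to $1$ as $t\to\infty$ because $c>0$; alternatively, one notes that $x(\cdot)$ is increasing and bounded above by $1$, hence convergent to some $L$, and passing to the limit in \eqref{eq:mean_field} forces $cL(1-L)=0$, so $L=1$. I expect no real obstacle here: the whole argument amounts to integrating a separable first-order ODE, and the only thing requiring a word of care is the invariance of $(0,1)$ noted in the first paragraph, which both justifies the separation of variables and guarantees that the trajectory actually attains every level in $(\alpha,1)$.
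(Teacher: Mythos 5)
Your proposal is correct and follows essentially the same route as the paper: integrate the separable logistic ODE \eqref{eq:mean_field} explicitly and read off both the convergence $x(t)\to 1$ and the hitting time \eqref{eq:time}. You are in fact slightly more careful than the paper's one-line argument for part (1), since you justify that the monotone limit must be the equilibrium $1$ (via the explicit solution or by passing to the limit in the ODE), a step the paper glosses over.
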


\begin{proof}
Since $ q_0 > q_1$ and $x(t) \in (0,1)$ for all $t \geq 0$, 
we have from~\eqref{eq:mean_field} that $\dot{x}(t) \geq 0$ for all $t \geq 0$.
Hence, $x(t) \to 1$ as $t \to \infty$.

The second assertion follows directly by solving \eqref{eq:mean_field}
with initial condition $x(0)=\alpha$. \qed
\end{proof}
%
%
%

\begin{remark}
It is worth noting here that
the process $x(\cdot)$ does not reach $1$ in a finite time
even though the process $x^{(N)}(\cdot)$ does reach $1$
in a finite time with probability $1$.
However, it is `reasonable' to assume that $t_N(\alpha)$ 
is 'closely' approximated by $t(1-1/N,\alpha)$. 
Such approximation of the absorption time
of an absorbing Markov chain using its corresponding mean field limit is 
common in the literature \cite{three_state_voter_vojnovic,Krapivsky_majority}. 
However, except from a few special cases, e.g. \cite{gast_absorption}, there is no general theory justifying
such approximations. 
\end{remark}

\begin{figure}[h!]
 \centering
 \includegraphics[height=0.35\textwidth,keepaspectratio]{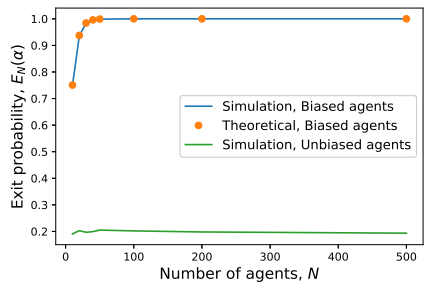}
 \caption{Exit probability $E_N(\alpha)$
 as a function of the number of agents $N$.
 Parameters: $q_0=1$, $q_1=0.5$, $\alpha=0.2$.}
 \label{fig:exit_asymmetric}
 \end{figure}

{\bf Simulation Results}: In Figure~\ref{fig:exit_asymmetric}, we plot
the exit probability for both unbiased ($q_0=q_1=1$) and biased ($1=q_0 > q_1=0.5$) cases 
as functions of the number of agents $N$ 
for $\alpha=0.2$. As expected from our theory, we observe that
in the biased case the exit probability exponentially increases to $1$ with the increase $N$. 
This is in contrast to
the unbiased case, where the exit probability remains constant
at $\alpha$ for all $N$.

In Figure~\ref{fig:cons_time_voter_asymmetric}, 
we plot the mean consensus time ${t}_N(\alpha)$
for both unbiased and biased cases as a function of $N$ for $\alpha=0.4$. 
We observe a good match between the estimate obtained in Theorem~\ref{thm:voter_estimate}
and the simulation results.
The observation also verifies the statement of Theorem~\ref{thm:voter_consensus}.
In Figure~\ref{fig:cons_time_voter_asymmetric_alpha}, we plot
the mean consensus time as a function of $\alpha$
for both biased an unbiased cases. The network size is kept fixed at $N=100$.
We observe that for the unbiased case,
the consensus time increases in the range $\alpha \in (0,0.5)$ and decreases in the range $\alpha \in (0.5,1)$.
In contrast, for the biased case, 
the consensus time steadily decreases with the increase
in $\alpha$. This is expected since, in the unbiased case,
consensus is achieved faster on a particular opinion if the
initial number agents having that opinion is more than
the initial number of agents having the other opinion.
On the other hand, in the biased case, consensus is 
achieved with high probability on the preferred opinion and therefore
increasing the initial fraction of agents having the preferred opinion
always decreases the mean consensus time.



\begin{figure*}[t!]
    \centering
    \begin{subfigure}[t]{0.5\textwidth}
        \centering
        \includegraphics[width=\textwidth]{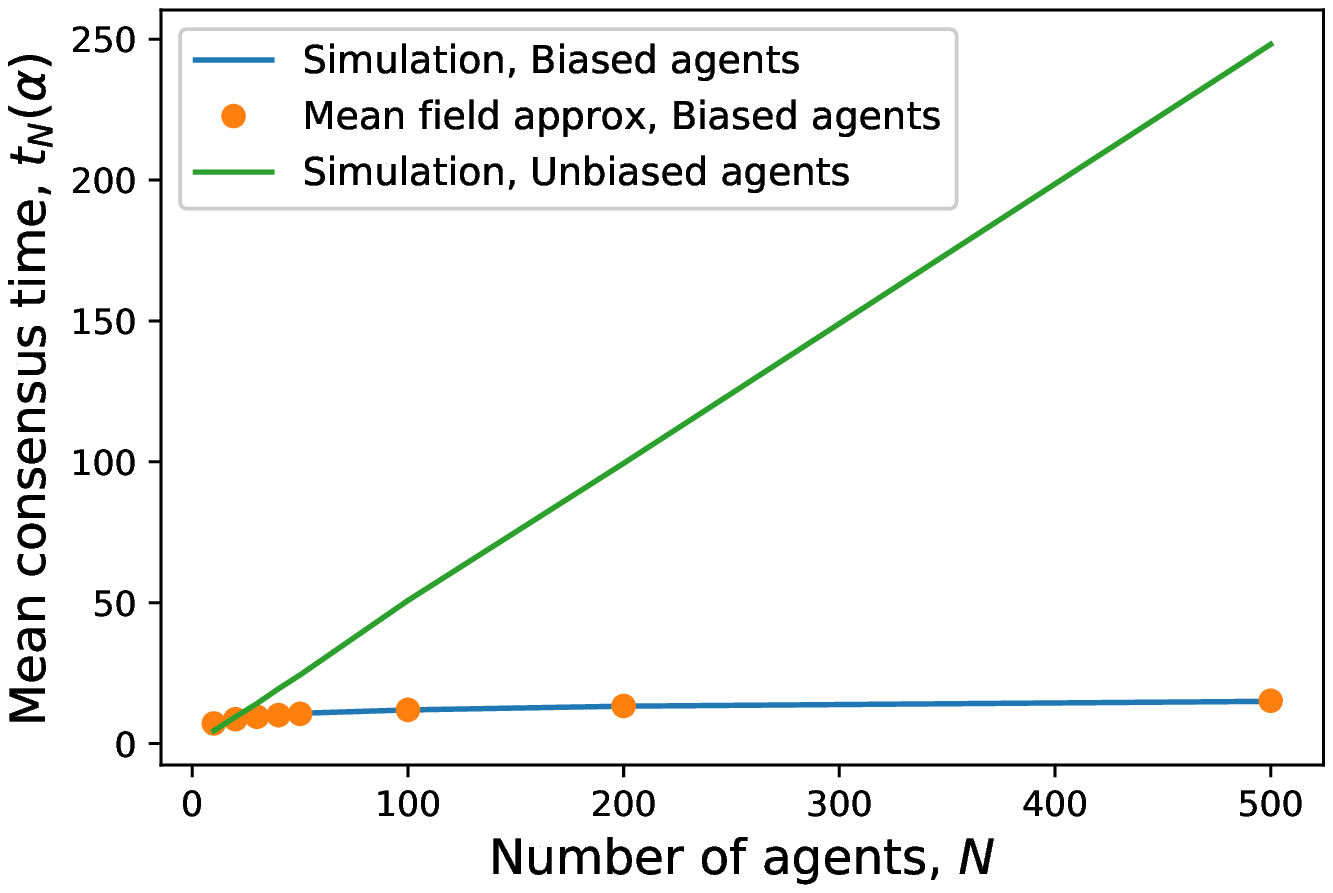}
        \caption{Mean consensus time ${t}_N(\alpha)$ as a function of the number of agents $N$}
        \label{fig:cons_time_voter_asymmetric}
    \end{subfigure}%
    ~ 
    \begin{subfigure}[t]{0.5\textwidth}
        \centering
        \includegraphics[width=\textwidth]{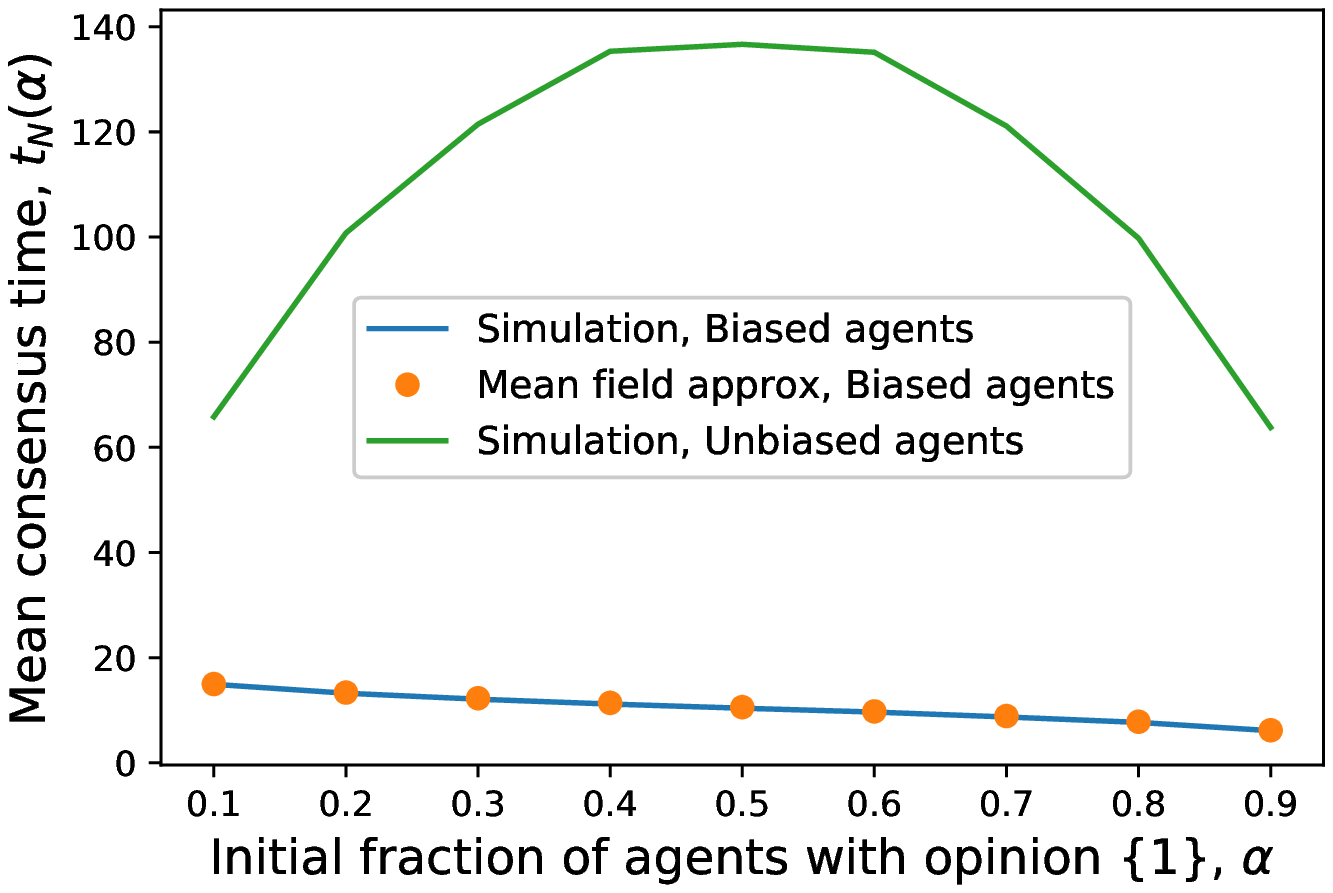}
        \caption{Mean consensus time ${t}_N(\alpha)$ as a function of the initial fraction $\alpha$ of agents with opinion $\{1\}$.}
        \label{fig:cons_time_voter_asymmetric_alpha}
    \end{subfigure}
    \caption{Mean consensus time under the voter model with biased agents}
\end{figure*}

\section{Main results for the majority rule model with biased agents}
\label{sec:majority_biased}

In this section, 
we consider the majority rule model with biased agents.
As in the voter model, it is easy to see that in this case a consensus is achieved
in a finite time with probability 1.
We proceed to find the exit probability to opinion ${1}$ and the mean consensus time.

Let $X^{(N)}(t)$
denote the number of agents
with opinion $\{1\}$ at time $t \geq 0$. 
Clearly, $X^{(N)}(\cdot)$ is a Markov process on state space $\{0,1,\ldots,N\}$.
The jump rates of $X^{(N)}$ from state $n$ to
state $n+1$ and $n-1$ are given by

\begin{align}
q(n \to n+1)&=(N-n)q_0\sum_{i=K+1}^{2K}\binom{2K}{i} \brac{\frac{n}{N}}^i \brac{\frac{N-n}{N}}^{2K-i},\\
                    &=(N-n)q_0\prob{\text{Bin}\brac{2K,\frac{n}{N}} \geq K+1},\label{eq:uprate}\\
q(n \to n-1)&=n q_1\sum_{i=K+1}^{2K}\binom{2K}{i} \brac{\frac{N-n}{N}}^i \brac{\frac{n}{N}}^{2K-i},\\
                   &=nq_1\prob{\text{Bin}\brac{2K,1-\frac{n}{N}} \geq K+1},\label{eq:downrate}
\end{align}
respectively. Let $\tilde X^{(N)}$ denote the embedded Markov chain corresponding to $X^{(N)}$.
Then the jump probabilities for the embedded chain $\tilde X^{(N)}$
are given by 
\begin{align}
p_{n,n+1}&=1-p_{n,n-1}\nonumber\\
                &=\frac{q(n \to n+1)}{q(n \to n-1)+q(n \to n+1)}\nonumber\\
                &=\frac{g_K(n/N)}{g_K(n/N)+r},
\label{eq:pdef}
\end{align}
where $1\leq n \leq N-1$, 
$r=q_1/q_0 < 1$, and $g_K:(0,1) \to (0,\infty)$ is defined as

\begin{equation}
g_K(x)=\frac{\frac{1}{x}\prob{\text{Bin}(2K,x) \geq K+1}}{\frac{1}{1-x} \prob{\text{Bin}\brac{2K,1-x} \geq K+1}}.
\label{eq:defg}
\end{equation}
With probability $1$, $X^{(N)}$ gets absorbed in one of the states
$0$ or $N$ in finite time. We are interested in the probability of absorption in the
state $N$ and the average time till absorption. We first state the following lemma
which is key to proving many of the results in this section.

\begin{lemma}
\label{lem:monotone}
The function $g_K:(0,1) \to (0,\infty)$ as defined by \eqref{eq:defg}
is strictly increasing and is therefore also one-to-one.
\end{lemma}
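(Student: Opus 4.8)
Write $F_K(x):=\prob{\text{Bin}(2K,x)\geq K+1}=\sum_{i=K+1}^{2K}\binom{2K}{i}x^i(1-x)^{2K-i}$, so that $g_K(x)=\frac{(1-x)F_K(x)}{x\,F_K(1-x)}$. The plan is to pass to the variable $t=x/(1-x)$, which is a strictly increasing bijection of $(0,1)$ onto $(0,\infty)$; it then suffices to show that $g_K$, viewed as a function of $t$, is strictly increasing. Substituting $x=t/(1+t)$, $1-x=1/(1+t)$ one gets $x^i(1-x)^{2K-i}=t^i/(1+t)^{2K}$, hence
\[
F_K(x)=\frac{1}{(1+t)^{2K}}\sum_{i=K+1}^{2K}\binom{2K}{i}t^i,\qquad F_K(1-x)=\frac{1}{(1+t)^{2K}}\sum_{i=K+1}^{2K}\binom{2K}{i}t^{2K-i}.
\]
Using the symmetry $\binom{2K}{i}=\binom{2K}{2K-i}$ and the substitution $l=2K-i$, both sums are governed by the single polynomial $B(t):=\sum_{l=0}^{K-1}\binom{2K}{l}t^l$: namely $\sum_{i=K+1}^{2K}\binom{2K}{i}t^{2K-i}=B(t)$ and $\sum_{i=K+1}^{2K}\binom{2K}{i}t^i=t^{2K}B(1/t)$. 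Therefore
\[
g_K(x)=\frac{(1-x)F_K(x)}{x\,F_K(1-x)}=\frac{t^{2K}B(1/t)}{t\,B(t)}=\frac{t^{2K-1}B(1/t)}{B(t)}=:G(t).
\]

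Next I would differentiate logarithmically. From $\log G(t)=(2K-1)\log t+\log B(1/t)-\log B(t)$ and writing $\beta(u):=uB'(u)/B(u)$ for the elasticity of $B$, a short computation gives $\frac{G'(t)}{G(t)}=\frac{1}{t}\bigl(2K-1-\beta(t)-\beta(1/t)\bigr)$. Now $B$ has degree $K-1$ with strictly positive coefficients, so $\beta(u)=\sum_{l=0}^{K-1} l\,w_l(u)$ is a convex combination of $\{0,1,\dots,K-1\}$ with weights $w_l(u)=\binom{2K}{l}u^l/B(u)>0$; hence $0\le\beta(u)\le K-1$ for every $u>0$. Consequently $\beta(t)+\beta(1/t)\le 2K-2<2K-1$, so $\frac{G'(t)}{G(t)}\ge \frac{1}{t}>0$ for all $t>0$. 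Thus $G$ is strictly increasing on $(0,\infty)$, so $g_K$ is strictly increasing on $(0,1)$; being continuous and strictly monotone, it is in particular one-to-one.

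The only real work is the algebraic reduction in the first step: rewriting the two binomial tails in the variable $t$ and recognising that they are controlled by the same polynomial $B$, up to the reciprocal $t\mapsto 1/t$ and a power of $t$. Once $g_K$ is exhibited in the form $t^{2K-1}B(1/t)/B(t)$, everything else is immediate, since the bound $\beta\in[0,K-1]$ is a one-line consequence of $B$ having nonnegative coefficients and degree $K-1$. A direct attack — differentiating $g_K$ in $x$ and trying to sign the resulting expression involving $F_K'(x)=K\binom{2K}{K}x^K(1-x)^{K-1}$ and $F_K$ itself — is possible but messier, because the auxiliary quantity $xF_K'(x)-F_K(x)$ changes sign near $x=1$; the change of variables is what makes the monotonicity transparent.
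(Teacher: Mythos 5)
Your proof is correct. It shares the paper's first (and crucial) step: passing to $t=x/(1-x)$ and reducing the claim to the strict monotonicity on $(0,\infty)$ of the rational function $\phi(t)=\sum_{i=K+1}^{2K}\binom{2K}{i}t^{i}\big/\sum_{i=K+1}^{2K}\binom{2K}{i}t^{2K+1-i}$, which is exactly your $G(t)=t^{2K-1}B(1/t)/B(t)$ since $\sum_{i=K+1}^{2K}\binom{2K}{i}t^{2K-i}=B(t)$ and $\sum_{i=K+1}^{2K}\binom{2K}{i}t^{i}=t^{2K}B(1/t)$. Where you genuinely depart from the paper is in how that monotonicity is established. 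The paper differentiates $\phi$ by the quotient rule and shows the numerator is a polynomial $\sum_{j=K+1}^{3K-1}M_j t^j$ with all $M_j>0$, via a combinatorial index comparison ($i\le\max(K,j-K)<(j+1)/2$, so each term $j+1-2i$ is positive); this is direct but requires careful bookkeeping of the coefficient sums. You instead exploit the symmetry $\binom{2K}{i}=\binom{2K}{2K-i}$ to express both numerator and denominator through the single degree-$(K-1)$ polynomial $B$ with positive coefficients, and then use logarithmic differentiation together with the elementary elasticity bound $0\le uB'(u)/B(u)\le K-1$, giving $G'(t)/G(t)\ge 1/t>0$. Your route avoids the coefficient-positivity computation entirely, is arguably more transparent, and yields a small bonus (the quantitative bound $G'(t)/G(t)\ge 1/t$, i.e.\ $G(t)/t$ is nondecreasing), while the paper's argument is more self-contained at the level of explicit polynomial manipulation. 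Both are valid; the one-to-one conclusion follows in either case from strict monotonicity.
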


In the following theorem, we characterise the exit probability to state $N$.

\begin{theorem}
\label{thm:phase_transit}
\begin{enumerate}
\item Let $E_N(n)$ denote the probability that the process $X^{(N)}$ gets absorbed in
state $N$ starting from state $n$. Then, we have

\begin{equation}
E_N(n)=\frac{\sum_{t=0}^{{n}-1}  \prod_{j=1}^t \frac{r}{g_K(j/N)}}{\sum_{t=0}^{N-1} \prod_{j=1}^t \frac{r}{g_K(j/N)}},
\label{eq:exit_majority_asymmetric}
\end{equation}

\item Define $E_N(\alpha):=E_N(\floor{\alpha N})$ and $\beta:=g_K^{-1}(r)$.
Then  $E_N(\alpha) \to 1$ (resp. $E_N(\alpha) \to 0$) as $N \to \infty$
if $\alpha > \beta$ (resp. $\alpha < \beta$) and this convergence is exponential in $N$.
\end{enumerate}
\end{theorem}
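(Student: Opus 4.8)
The plan is to regard $X^{(N)}$ as a birth--death chain on $\{0,1,\ldots,N\}$ absorbed at $0$ and $N$, so that part (1) is the classical gambler's-ruin identity and part (2) follows from a Laplace-type analysis of the ratio of sums in \eqref{eq:exit_majority_asymmetric}. For part (1): by \eqref{eq:pdef} the embedded chain $\tilde X^{(N)}$ has up-probability $p_{n,n+1}=g_K(n/N)/(g_K(n/N)+r)$ and down-probability $p_{n,n-1}=r/(g_K(n/N)+r)$, hence one-step ratio $\rho_n:=p_{n,n-1}/p_{n,n+1}=r/g_K(n/N)$. Conditioning on the first step, $E_N$ solves $E_N(n)=p_{n,n+1}E_N(n+1)+p_{n,n-1}E_N(n-1)$ with $E_N(0)=0$ and $E_N(N)=1$; writing $d_n:=E_N(n)-E_N(n-1)$ this becomes $d_{n+1}=\rho_n d_n$, so $d_{t+1}=d_1\prod_{j=1}^{t}\rho_j$, and summing from $1$ to $n$ while using $E_N(N)=1$ to fix $d_1$ gives \eqref{eq:exit_majority_asymmetric} (empty product $=1$). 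This step is routine.

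For part (2), abbreviate $\pi_t:=\prod_{j=1}^{t} r/g_K(j/N)$, so $E_N(\alpha)=\big(\sum_{t=0}^{\lfloor\alpha N\rfloor-1}\pi_t\big)/\big(\sum_{t=0}^{N-1}\pi_t\big)$. The key input is Lemma~\ref{lem:monotone}: $g_K$ is continuous and strictly increasing on $(0,1)$, and since $g_K(x)\to0$ as $x\to0^+$ while $g_K(x)\to\infty$ as $x\to1^-$, it is a bijection of $(0,1)$ onto $(0,\infty)$, so $\beta=g_K^{-1}(r)\in(0,1)$ is well defined with $g_K(x)<r$ for $x<\beta$ and $g_K(x)>r$ for $x>\beta$. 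Hence the increments $\ln(r/g_K(j/N))$ are positive while $j/N<\beta$ and negative while $j/N>\beta$, so $t\mapsto\pi_t$ is unimodal peaking at $t\approx\beta N$. I would then introduce the rate function $\Phi(x):=\int_0^x\ln(g_K(u)/r)\,du$ and show, by a Riemann-sum estimate, that $\ln\pi_{\lfloor xN\rfloor}=-N\Phi(x)+O(\log N)$ uniformly in $x$; here $\Phi$ is strictly decreasing on $(0,\beta)$, strictly increasing on $(\beta,1)$, $\Phi(0)=0$, and---by the reflection identity $g_K(1-x)=1/g_K(x)$, which forces $\int_0^1\ln g_K=0$---$\Phi(1)=-\ln r>0$; in particular $\Phi$ attains its unique minimum $\Phi(\beta)<0$ at $\beta$, and $\beta<\tfrac12$ since $g_K(\tfrac12)=1>r$ (this is the "minority consensus" phenomenon).

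Laplace's method then settles both cases. Since $\beta$ is interior, $\sum_{t=0}^{N-1}\pi_t=e^{-N\Phi(\beta)+O(\log N)}$. If $\alpha>\beta$, write $1-E_N(\alpha)=\big(\sum_{t=\lfloor\alpha N\rfloor}^{N-1}\pi_t\big)/\big(\sum_{t=0}^{N-1}\pi_t\big)$; as $\Phi$ increases on $[\beta,1]$ the numerator is at most $N\max_{t\ge\lfloor\alpha N\rfloor}\pi_t=Ne^{-N\Phi(\alpha)+O(\log N)}$, and bounding the denominator below by the single term $\pi_{\lfloor\beta N\rfloor}=e^{-N\Phi(\beta)+O(\log N)}$ yields $1-E_N(\alpha)\le Ne^{-N(\Phi(\alpha)-\Phi(\beta))+O(\log N)}$, which vanishes exponentially because $\Phi(\alpha)>\Phi(\beta)$. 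If $\alpha<\beta$, then $\Phi$ decreases on $[0,\alpha]$, so the numerator of $E_N(\alpha)$ is at most $Ne^{-N\Phi(\alpha)+O(\log N)}$ while the denominator is still at least $e^{-N\Phi(\beta)+O(\log N)}$, giving $E_N(\alpha)\le Ne^{-N(\Phi(\alpha)-\Phi(\beta))+O(\log N)}\to0$ exponentially (again $\Phi(\alpha)>\Phi(\beta)$, now because $\Phi$ decreases on $(0,\beta)$). Passing from $\lfloor\alpha N\rfloor$ to $\alpha N$ is harmless since $\alpha$ is a fixed interior point where $\Phi$ is Lipschitz.

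The step I expect to be the main obstacle is making the estimate $\ln\pi_{\lfloor xN\rfloor}=-N\Phi(x)+O(\log N)$ rigorous and uniform near the endpoints, where $g_K$ degenerates: $g_K(x)\sim\binom{2K}{K+1}x^{K}$ as $x\to0$ and, by reflection, $g_K(x)\sim\binom{2K}{K+1}^{-1}(1-x)^{-K}$ as $x\to1$, so $\ln(g_K/r)$ is only logarithmically---hence integrably---singular there. One must check that the handful of polynomially large initial factors of $\pi_t$ and the polynomially small terminal ones contribute only $O(\log N)$ to $\ln\pi_t$ (e.g.\ by peeling off a bounded number of boundary indices and matching them against the corresponding boundary piece of $N\Phi$ via the explicit tail asymptotics of $g_K$, while controlling the smooth interior by the standard Euler--Maclaurin/Riemann-sum bound, valid where $\ln(g_K/r)$ and its derivative are bounded). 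The rest---absorbing polynomial prefactors into the exponent and the floor/round-off adjustments---is routine bookkeeping.
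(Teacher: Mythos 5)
Your part (1) is exactly the paper's argument: first-step analysis, the telescoping of the increments $E_N(n+1)-E_N(n)$ with ratio $r/g_K(n/N)$, and the boundary conditions to fix the normalization. For part (2) you take a genuinely different, Laplace-method route: you encode the product $\pi_t=\prod_{j\le t} r/g_K(j/N)$ through the rate function $\Phi(x)=\int_0^x\ln\bigl(g_K(u)/r\bigr)\,du$, use Lemma~\ref{lem:monotone} (plus the reflection identity $g_K(1-x)=1/g_K(x)$ and the boundary asymptotics of $g_K$) to locate the unique interior minimum of $\Phi$ at $\beta$, and compare the tail of the sum against the peak term $\pi_{\floor{\beta N}}$. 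This is sound, and it buys more than the theorem asks for: it identifies the precise exponential rate $\Phi(\alpha)-\Phi(\beta)$. The paper instead stays purely discrete: it observes that $D_N(t)\propto\pi_t$ is a probability distribution, unimodal with mode at $\floor{\beta N}$, picks a buffer point $\beta'\in(\beta,\alpha)$, bounds every ratio $D_N(n)/D_N(n-1)=r/g_K(n/N)\le r':=r/g_K(\beta')<1$ for $n>\floor{\beta' N}$, and kills the tail $\sum_{t\ge\floor{\alpha N}}D_N(t)$ by a geometric series with $D_N(\floor{\beta' N})\le 1$ — two lines, no Riemann-sum error control, no endpoint asymptotics of $g_K$. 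The step you single out as the main obstacle (the uniform estimate $\ln\pi_{\floor{xN}}=-N\Phi(x)+O(\log N)$, with the logarithmic singularities of $\ln g_K$ at $0$ and $1$) is indeed where your version would have to do real work, but it is also avoidable: since $\alpha$ and $\beta$ are fixed interior points, your own unimodality observation already gives $\pi_{\floor{\alpha N}}/\pi_{\floor{\beta' N}}\le\bigl(r/g_K(\beta')\bigr)^{\floor{\alpha N}-\floor{\beta' N}}$ directly, which is exactly the paper's shortcut; if you only want the stated conclusion, you can discard the rate function and the Euler--Maclaurin bookkeeping altogether. (Minor remarks: the claim $\beta<1/2$ and the ``minority consensus'' aside are not needed, and your bijectivity check for $g_K$ onto $(0,\infty)$, ensuring $\beta\in(0,1)$, is a worthwhile detail the paper leaves implicit.)
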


%
%
%
%
%
%
%
Hence, a phase transition of the exit probability occurs at $\beta=g_K^{-1}\brac{r}$
for all values of $K \geq 1$. This implies, that even though the agents are biased
towards the preferred opinion, consensus may not be obtained on the preferred opinion if the initial fraction of agents having the preferred opinion
is below the threshold $\beta$. This is in contrast to the voter model, where consensus is obtained on the preferred opinion irrespective of the initial state.  
The threshold $\beta$ can be computed by solving $g_K(\beta)=r$ using either Newton-Raphson method or other fixed point methods.

\begin{remark}
We note that for the unbiased majority rule model we have $r=1$
and $\beta=g_K^{-1}(r)=g_K^{-1}(1)=1/2$. 
Thus, the known results \cite{ganesh_consensus,Krapivsky_majority} 
for the majority
rule model with unbiased agents are recovered.
\end{remark}

We now characterise 
the mean time ${t}_N(\alpha)$ to reach the consensus
state starting from $\alpha$ fraction of agents having opinion $\{1\}$.
As before, we define $T_n$ to be the random time
of first hitting the state $n$, i.e., $T_n=\inf\cbrac{t\geq 0: X^{(N)}(t)\geq n}$.

\begin{theorem}
\label{thm:majority_consensus}
For $\alpha \in (0,\beta) \cup (\beta,1)$ we have $t_N(\alpha)=\Theta(\log N)$.
\end{theorem}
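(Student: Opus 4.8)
The plan is to split the trajectory of $X^{(N)}$ into a fast \emph{bulk} phase, during which it travels from $\floor{\alpha N}$ up to the level $\ceil{(1-\delta)N}$ for a small fixed $\delta>0$, and a slow \emph{endgame} phase, during which the number $m:=N-X^{(N)}$ of agents holding the non-preferred opinion falls from $\Theta(N)$ to $0$; the logarithmic order of $t_N(\alpha)$ will come entirely from the endgame. I will treat $\alpha\in(\beta,1)$ in detail; the case $\alpha\in(0,\beta)$ is handled by the mirror-image argument (endgame near state $0$, the preferred-opinion agents playing the role of the dying population, and $T_0,T_N$, $q_0,q_1$, and the two binomial tails interchanged). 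Everything rests on two elementary facts extracted from \eqref{eq:uprate}--\eqref{eq:downrate}. \textbf{(a)} The total jump rate $q(n\to n+1)+q(n\to n-1)$ is bounded below by a constant $\rho_0>0$ uniformly in $n\in\cbrac{1,\dots,N-1}$ and in $N$ (for $n\le N/2$ the down-rate is at least $nq_1\prob{\text{Bin}(2K,\tfrac12)\ge K+1}\ge\rho_0$, and symmetrically for $n>N/2$); consequently $\e_n\sbrac{T_0\wedge T_N}$ and $\e_n\sbrac{(T_0\wedge T_N)^2}$ are at most polynomial in $N$ from every starting state. \textbf{(b)} Since $K\ge1$, the ``wrong-way'' transition rate in the endgame is of strictly smaller order: there is a small fixed $\delta>0$ (which I also take $<1-\alpha$ and $<\tfrac12(1-\alpha)$) such that, for all $1\le m\le 2\delta N$, $q(n\to n-1)\le c_K(m/N)^{K}m\le\tfrac14 q_0 m$ while $q(n\to n+1)\ge\tfrac34 q_0 m$. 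Fix such a $\delta$, set $\beta'=(\alpha+\beta)/2\in(\beta,\alpha)$, and note that by Lemma~\ref{lem:monotone} ($g_K$ strictly increasing) the drift $F(x)=(1-x)q_0\prob{\text{Bin}(2K,x)\ge K+1}-xq_1\prob{\text{Bin}(2K,1-x)\ge K+1}$ is bounded below by some $F_{\min}>0$ on the compact interval $[\beta',1-\delta]$, on which the embedded chain's right-step probability $g_K(x)/(g_K(x)+r)$ also exceeds $\tfrac12$.

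\emph{Lower bound.} Let $\tau=T_{\ceil{(1-\delta)N}}$ and $A=\cbrac{T_N<T_0}$. On $A$ we have $\tau<\infty$, $m(\tau)=N-\ceil{(1-\delta)N}\ge\delta N-1$, and $T_0\wedge T_N=T_N\ge T_N-\tau$. I would couple the post-$\tau$ evolution of $m$ with a linear pure-death process $\widehat m$ of per-particle rate $q_0$ started at $\widehat m(0)=m(\tau)$: each surviving non-preferred agent flips to the preferred opinion at rate $q_0\prob{\text{Bin}(2K,X^{(N)}/N)\ge K+1}\le q_0$, so a particle-wise thinning coupling gives $\widehat m(s)\le m(\tau+s)$ for all $s\ge0$ (put $\widehat m\equiv0$ off $\cbrac{\tau<\infty}$), whence $\widehat\sigma:=\inf\cbrac{s:\widehat m(s)=0}\le T_N-\tau$ on $A$. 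Conditioned on $\cbrac{\tau<\infty}$, $\widehat\sigma$ has the law of $\sum_{j=1}^{m(\tau)}E_j$ with $E_j\sim\text{Exp}(jq_0)$ independent, so $\e\sbrac{\widehat\sigma}=\p\brac{\tau<\infty}\,q_0^{-1}H_{m(\tau)}=(1-o(1))q_0^{-1}\log N$ and $\e\sbrac{\widehat\sigma^2}=O((\log N)^2)$, using $\p\brac{\tau<\infty}\ge\p(A)=E_N(\alpha)\to1$ by Theorem~\ref{thm:phase_transit}. Since also $\p\brac{A^c}=1-E_N(\alpha)\le e^{-cN}$ by that theorem, Cauchy--Schwarz yields
\[
t_N(\alpha)\ \ge\ \e\sbrac{(T_0\wedge T_N)\mathbbm{1}_A}\ \ge\ \e\sbrac{\widehat\sigma\,\mathbbm{1}_A}\ \ge\ \e\sbrac{\widehat\sigma}-\sqrt{\e\sbrac{\widehat\sigma^2}\,\p\brac{A^c}}\ =\ \Omega(\log N).
\]

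\emph{Upper bound.} I would bound the two phases by optional-stopping arguments. On $R_{\mathrm{bulk}}=\cbrac{\beta'N\le n\le(1-\delta)N}$ the nonnegative Lyapunov function $V_1(n)=(1-\delta)N-n$ has generator $\mathcal LV_1(n)=-NF(n/N)\le-NF_{\min}$, so the expected time $\e\sbrac{\sigma_1}$ to leave $R_{\mathrm{bulk}}$ from $\floor{\alpha N}$ is at most $V_1(\floor{\alpha N})/(NF_{\min})=O(1)$, and a gambler's-ruin estimate on the (top-biased) embedded chain shows the exit is into the endgame except with probability $e^{-c_1N}$. On $R_{\mathrm{end}}=\cbrac{1\le m\le 2\delta N}$ the nonnegative Lyapunov function $V_2(n)=\log(1+N-n)$ satisfies, using $\tfrac{x}{1+x}\le\log(1+x)\le x$ and fact (b), $\mathcal LV_2(n)\le\tfrac{1}{m+1}\brac{q(n\to n-1)-q(n\to n+1)}\le-\tfrac{q_0}{4}$ for $1\le m\le 2\delta N$, so the expected time $\e\sbrac{\sigma_2}$ to leave $R_{\mathrm{end}}$ from $m(\tau)\le\delta N$ is at most $4q_0^{-1}\log(1+\delta N)=O(\log N)$, and the exit is absorption at $N$ except with probability $e^{-c_2N}$. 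On the typical event that the bulk exits through the top and the endgame through $m=0$, $T_0\wedge T_N=\sigma_1+\sigma_2$, whose expected contribution is $\e\sbrac{\sigma_1}+\e\sbrac{\sigma_2}=O(\log N)$; the complementary event has probability $\le e^{-cN}$, and by fact (a) and Cauchy--Schwarz it contributes $o(1)$. Hence $t_N(\alpha)=O(\log N)$, and combined with the lower bound, $t_N(\alpha)=\Theta(\log N)$.

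\emph{Main obstacle.} The conceptual skeleton is short; the real work is (i) making the binomial-tail estimates (a) and especially (b) rigorous and uniform in $N$ --- it is precisely the hypothesis $K\ge1$ that renders the minority count \emph{subcritical} in the endgame, so that it is swept to $0$ in $\Theta(\log N)$ time rather than lingering near a positive fraction --- and (ii) the bookkeeping that splices the two phases, which relies on the \emph{exponential} decay of the misdirected exit probability supplied by Theorem~\ref{thm:phase_transit} to make the atypical excursions negligible. A more computational alternative would be to express $\e_n\sbrac{T_0\wedge T_N}$ through the birth--death Green's function --- whose ingredients are the same products $\prod_j r/g_K(j/N)$ as in \eqref{eq:exit_majority_asymmetric} --- and estimate the resulting double sum, again using Lemma~\ref{lem:monotone}.
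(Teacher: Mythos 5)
Your route is genuinely different from the paper's. The paper never splits the trajectory into phases: it writes $t_N(\alpha)$ via Wald's identity as a sum over states of (expected number of visits)$\times$(mean holding time), gets the $\Omega(\log N)$ side from the harmonic sum of holding times of the states that are visited at least once on the relevant absorption event, and gets the $O(\log N)$ side by proving $\e_{\lfloor\alpha N\rfloor}\sbrac{Z_n}=O(1)$ uniformly in $n$; this is done by conditioning on the absorption event (a Doob $h$-transform built from the explicit exit probabilities of Theorem~\ref{thm:phase_transit}), realising the left-jump counts as a branching process with unit immigration, and controlling the offspring means through the monotonicity of $g_K$ (Lemma~\ref{lem:monotone}). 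Your proposal instead uses a bulk/endgame decomposition, Foster--Lyapunov drift bounds (linear in the bulk, $\log(1+m)$ in the endgame), a pure-death domination of the minority count for the lower bound, and Theorem~\ref{thm:phase_transit}'s exponential estimate to discard atypical excursions. Your drift computations are correct: the tail bound $q(n\to n-1)\le 4^K q_1 (m/N)^K m$ indeed makes the endgame subcritical for $K\ge 1$, the bound $F\ge F_{\min}>0$ on $[\beta',1-\delta]$ follows from Lemma~\ref{lem:monotone}, and the coupling inequality $\widehat m(s)\le m(\tau+s)$ is legitimate. What your approach buys is a transparent probabilistic reading of where the $\log N$ comes from (a subcritical death process sweeping out the minority); what the paper's buys is that a single uniform bound on visit counts removes any need to splice phases or control rare excursions.

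There is, however, one step whose justification as written is invalid, and it is precisely the one your splicing relies on: in fact (a) you assert that because the total jump rate is bounded below by a constant, $\e_n\sbrac{T_0\wedge T_N}$ and $\e_n\sbrac{(T_0\wedge T_N)^2}$ are at most polynomial in $N$ from every starting state. A lower bound on jump rates only controls holding times, not the number of steps of the embedded chain; a birth--death chain with order-one rates but drift pointing toward an interior state has absorption time exponential in $N$. So the polynomial moment bound is true here only because of the outward drift away from $\lfloor\beta N\rfloor$ (i.e.\ because of Lemma~\ref{lem:monotone}), and it must be proved, not inferred from the rate bound --- note also that a bad excursion can land the chain near the unstable point $\lfloor\beta N\rfloor$, where Theorem~\ref{thm:phase_transit} gives you no exponentially small probability to play against, so you cannot simply restart the two-phase argument there. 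This gap is fixable: for instance, establish $\sup_n\e_n\sbrac{T_0\wedge T_N}\le \mathrm{poly}(N)$ directly from the explicit birth--death expressions (the same products $\prod_j r/g_K(j/N)$ appearing in \eqref{eq:exit_majority_asymmetric}) or from a comparison argument exploiting that $p_{n,n+1}\ge 1/2$ for $n\ge \lfloor\beta N\rfloor$ and $p_{n,n-1}\ge 1/2$ for $n\le\lfloor\beta N\rfloor$, and then upgrade to the second moment by the standard geometric-trials bound $\p_n\brac{T>2k\sup_m\e_m\sbrac{T}}\le 2^{-k}$. With that supplement (and the routine gambler's-ruin estimates you cite for the exit-through-the-wrong-side events), your argument goes through; without it, the Cauchy--Schwarz step that absorbs the exponentially rare events is unsupported.
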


The theorem above shows that the mean consensus time is logarithmic in the network size.
To prove the theorem we use branching processes and the monotonicity shown in Lemma~\ref{lem:monotone}.
Our proof does not require the indistinguishability of the opinions and is therefore
more general than existing proofs for the unbiased voter model\cite{Krapivsky_majority,ganesh_consensus}.

It is easy to derive the mean field limit corresponding to
the empirical measure process $x^{(N)}=X^{(N)}/N$.
Using the transition rates of the process $X^{(N)}(\cdot)$
it can be verified that 
if $x^{(N)}(0) \Rightarrow \alpha$ as $N \to \infty$, then
$x^{(N)}(\cdot) \Rightarrow x(\cdot)$ as $N \to \infty$, where process
$x(\cdot)$ satisfies the initial condition $x(0)=\alpha$ and is governed by
the following ODE:
%

\begin{align}
\dot{x}(t)&=q_0x(t) (1-x(t))h_K(x(t))(g_K(x(t))-r), 
\label{eq:maj_K}
\end{align}
where $h_K$ is defined as
$h_K(x)=\sum_{i=K+1}^{2K} \binom{2k}{i} (1-x)^{i-1}x^{2K-i}$.
By definition $h_K(x) > 0$ for $x \in (0,1)$.
Hence, from Lemma~\ref{lem:monotone}, it follows that
the process $x(\cdot)$ has three equilibrium points
at $0$, $1$, and $\beta$, respectively.
Furthermore, using the monotonicity of $g_K$ established in Lemma~\ref{lem:monotone}
and the non-negativity of $h_K$ we have that $\dot x(t) > 0$ for $x(t) > \beta$
and $\dot x(t) < 0$ for $x(t) < \beta$.
This shows that the only stable equilibrium points of 
the mean field limit $x(\cdot)$ are $0$ and $1$.
At $\beta$, $x(\cdot)$ has an unstable equilibrium point.

{\bf Simulation Results}:  In Figure~\ref{fig:exit_majority_asymmetric}, 
we plot the exit probability $E_N(\alpha)$
as a function of the total number $N$ of agents in the network.
The parameters are chosen to be $q_0=1$, $q_1=0.6$, $K=1$.
For this parameter setting, we can explicitly compute the threshold $\beta$ to be $\beta=g_K^{-1}(r)=q_1/(q_0+q_1)=0.375$.
We observe that for $\alpha > \beta$ the exit probability exponentially increases to $1$
with the increase in $N$
and for $\alpha < \beta$ the exit probability decreases exponentially to zero with the increase in $N$.
This is in accordance with the assertion made in Theorem~\ref{thm:phase_transit}.
Similarly, in Figure~\ref{fig:exit_majority_asymmetric_alpha},
we plot the exit probability as a function of the initial
fraction $\alpha$ of agents having opinion $\{1\}$ for the same parameter setting and different
values of $N$.
The plot shows a clear phase transition at $\beta=0.375$.
The sharpness of the transition increases as $N$ increases.

\begin{figure*}[t!]
    \centering
    \begin{subfigure}[t]{0.5\textwidth}
        \centering
        \includegraphics[width=\textwidth]{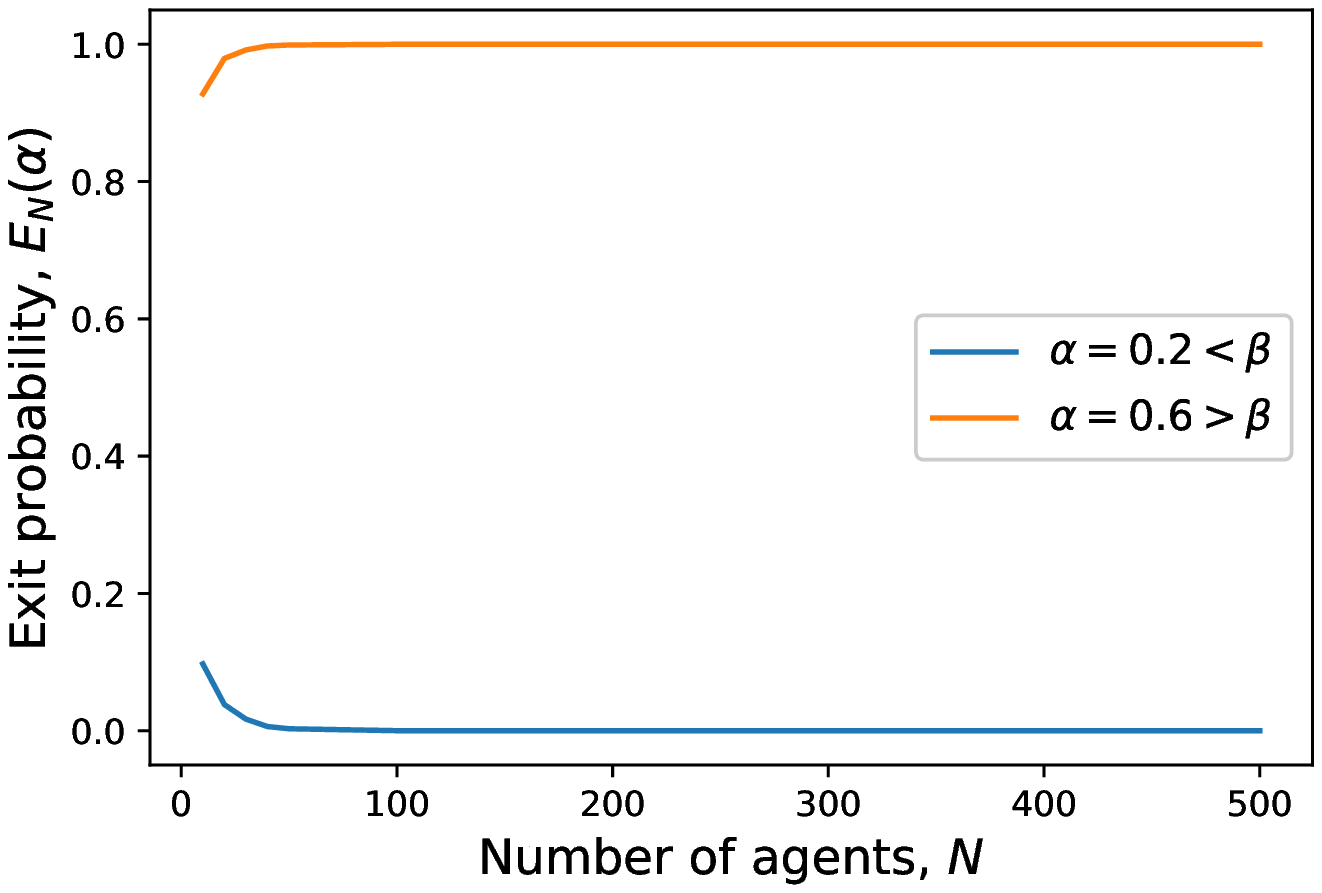}
        \caption{Exit probability $E_N(\alpha)$
 as a function of the number of agents $N$.
 Parameters: $q_0=1$, $q_1=0.6$}
        \label{fig:exit_majority_asymmetric}
    \end{subfigure}%
    ~ 
    \begin{subfigure}[t]{0.5\textwidth}
        \centering
        \includegraphics[width=\textwidth]{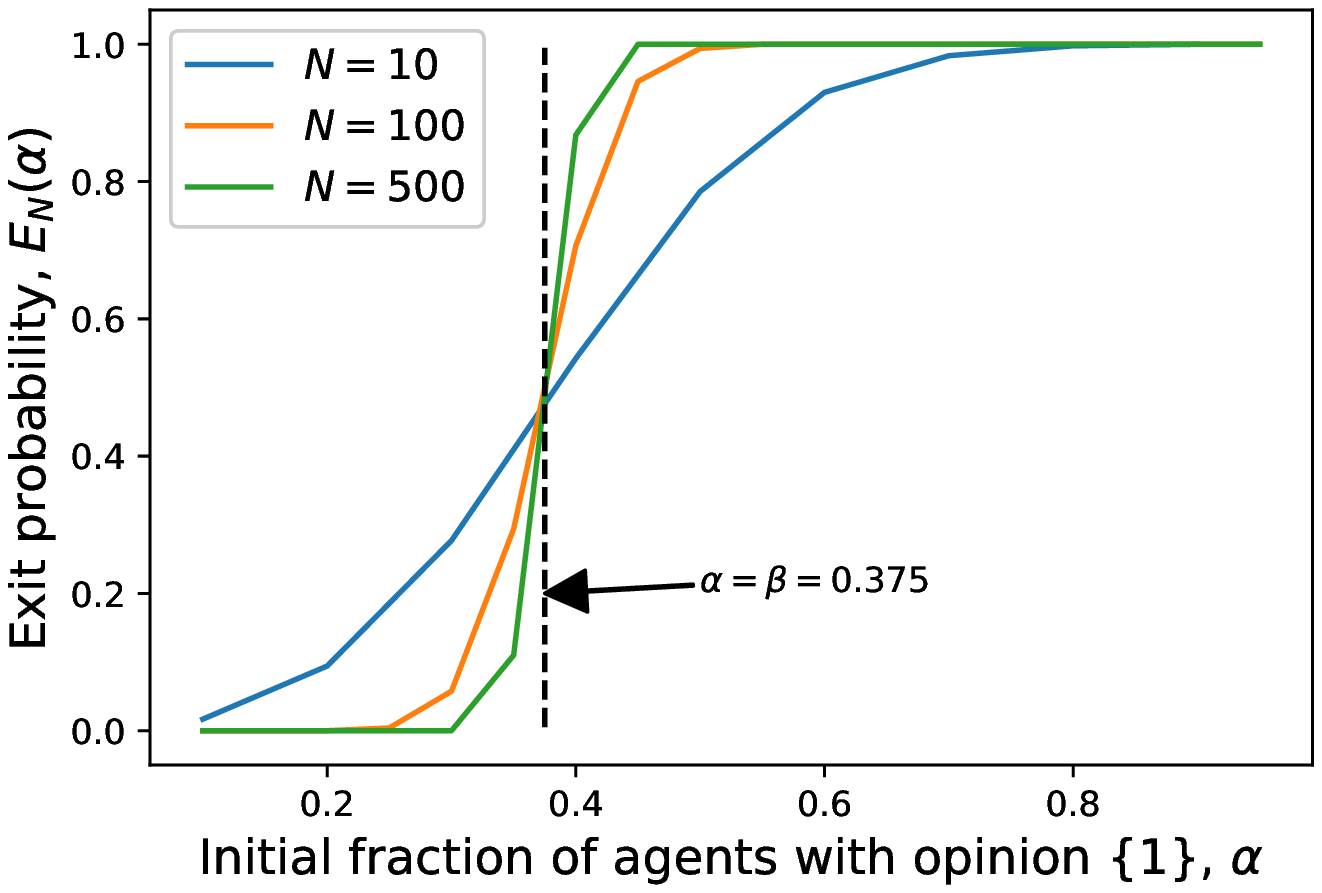}
        \caption{Exit probability $E_N(\alpha)$ as a function of the initial fraction $\alpha$
 of agents with opinion $\{1\}$. Prameters: $q_0=1, q_1=0.6$.}
        \label{fig:exit_majority_asymmetric_alpha}
    \end{subfigure}
    \caption{Mean consensus time under the voter model with biased agents}
    \label{fig:exit_prob_maj_asym}
\end{figure*}

In Figure~\ref{fig:consensus_N_majority}, we plot the mean consensus time under the majority rule
as a function of $N$ for different values of $\alpha$. As predicted by Theorem~\ref{thm:majority_consensus},
we find that that the mean consensus time is logarithmic in the network size.
In Figure~\ref{fig:consensus_majority_asymmetric_K},
we study the mean consensus time as a function of $K$
for $q_0=1,q_1=0.6,\alpha=0.5, N=50$.
We observe that with the increase in $K$, the mean time to reach consensus  decreases.
This is expected since the slope of the mean field $x(t)$
increases with $K$. This leads to faster convergence to the stable equilibrium points.


\begin{figure*}[t!]
    \centering
    \begin{subfigure}[t]{0.5\textwidth}
        \centering
        \includegraphics[width=\textwidth]{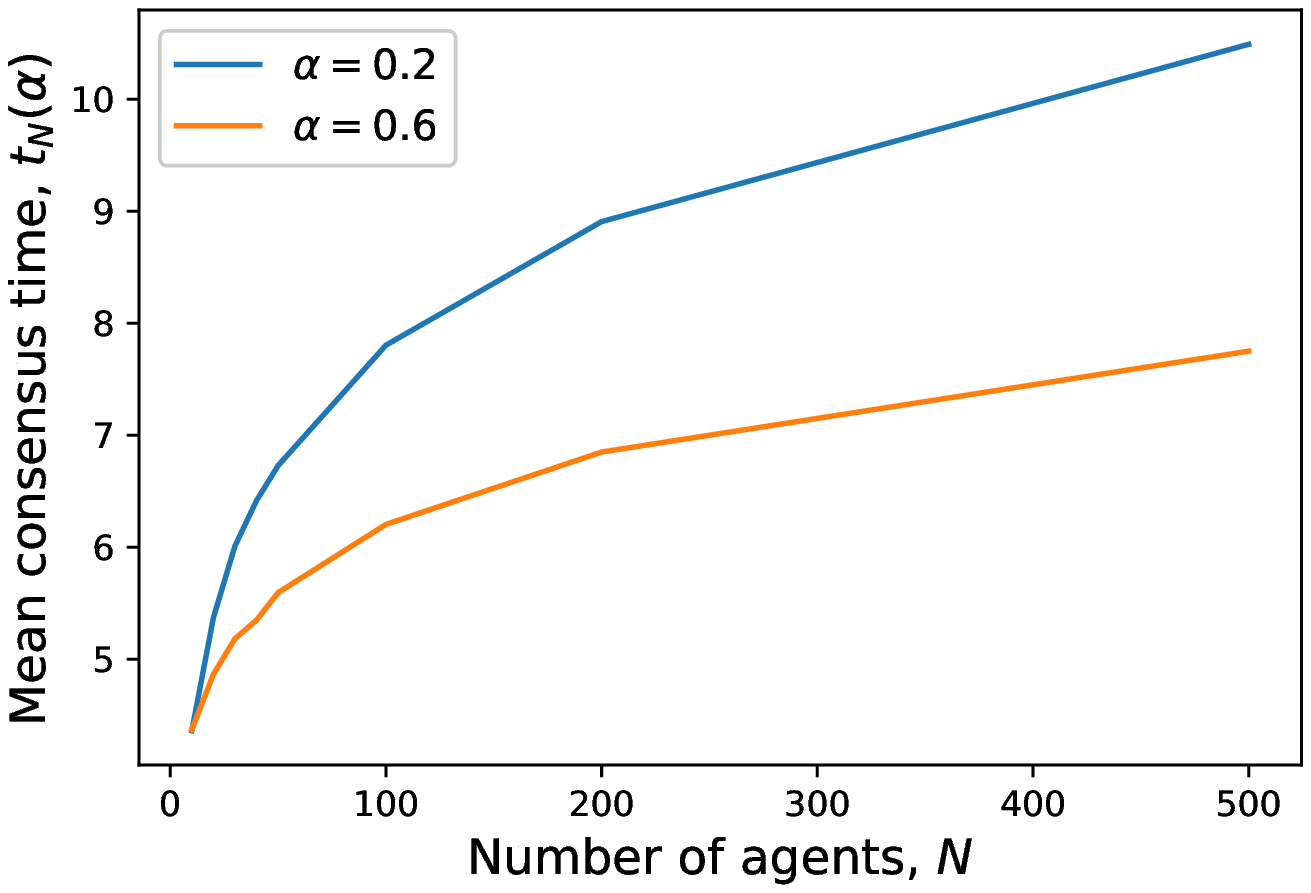}
        \caption{Mean consensus time as a function of $N$. Parameters: $q_0=1,q_1=0.6,\alpha=0.5$}
        \label{fig:consensus_N_majority}
    \end{subfigure}%
    ~ 
    \begin{subfigure}[t]{0.5\textwidth}
        \centering
        \includegraphics[width=\textwidth]{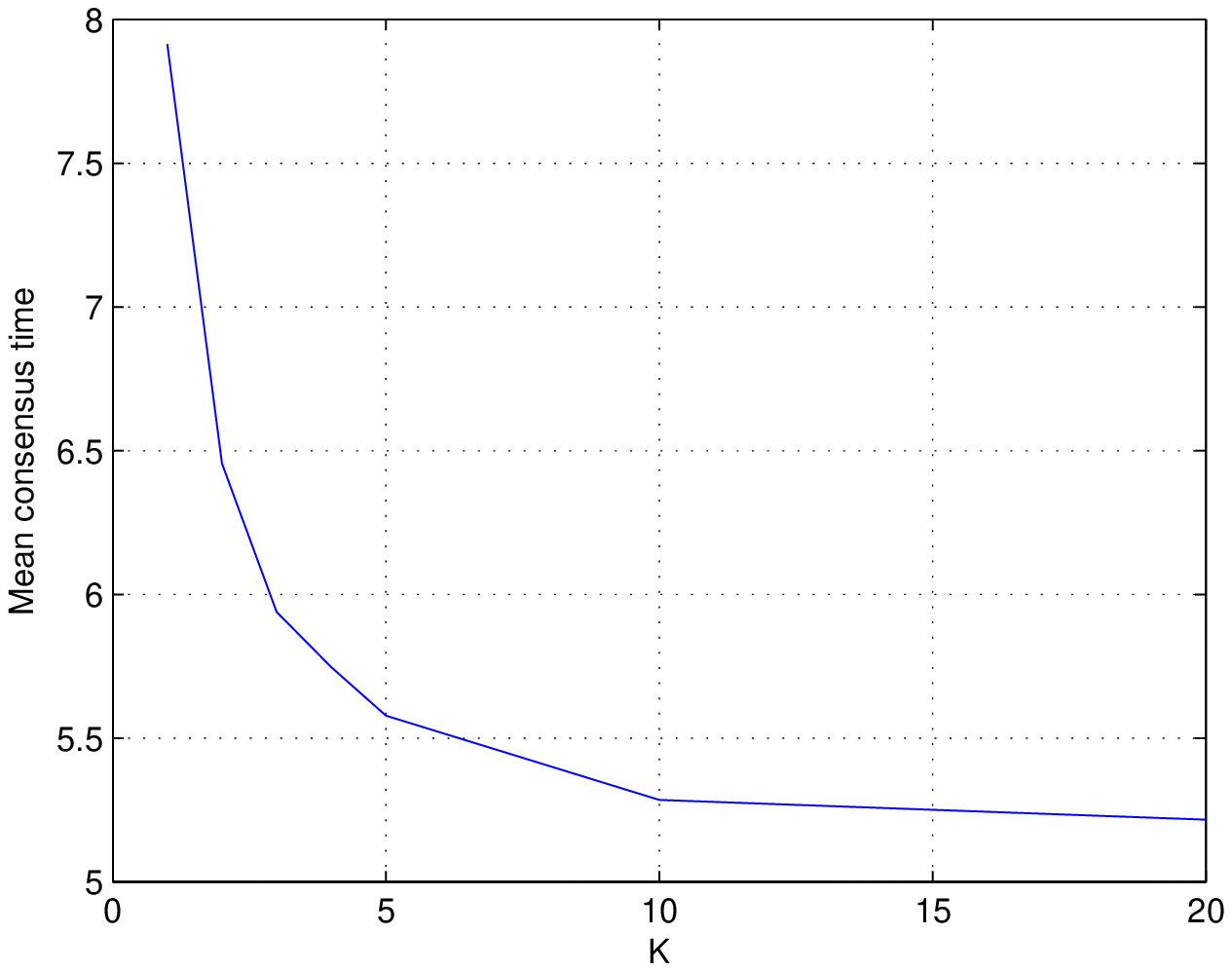}
        \caption{Mean consensus time as a function of $K$.
 Parameters: $q_0=1,q_1=0.6,\alpha=0.5, N=50$}
        \label{fig:consensus_majority_asymmetric_K}
    \end{subfigure}
    \caption{Mean consensus time under majority rule with biased agents}
    \label{fig:exit_prob_maj_asym}
\end{figure*}

\section{Majority model with stubborn agents}
\label{sec:majority_stubborn}

In this section, we consider the majority rule model in the presence of `stubborn agents'.
These are agents that never update their opinions.
The other agents, referred to as the {\em non-stubborn} agents,
are assumed to update their opinions at all points of the Poisson
processes associated with themselves. 
We focus on the case where the updates occur according
to the majority rule model. The voter model with stubborn agents
was studied before in \cite{Yildiz_voter_stubborn} using coalescing random walks. 
However, this technique does not apply to
the majority rule model. We use mean field techniques to study the opinion dynamics under the majority rule model.
 
We denote by $\gamma_i$, $i \in \{0,1\}$, the fraction of agents in network who are stubborn
and have opinion $i$ at all times.
Thus, $(1-\gamma_0-\gamma_1)$
is fraction of non-stubborn agents in the network.
The presence of stubborn agents prevents
the network
from reaching a consensus state.
This is because at all times there are at least $N \gamma_0$
stubborn agents having opinion $\{0\}$ and $N \gamma_1$ stubborn agents
having opinion $\{1\}$. 
Furthermore, since each non-stubborn agent
may interact with some stubborn agents at every update instant,
it is always possible for the non-stubborn agent to change its opinion. 
Below we characterise the equilibrium fraction
of non-stubborn agents having opinion $\{1\}$ in the network
for large $N$ using mean field techniques. 
For analytical tractability, we consider the case $K=1$, i.e.,
when an agent sample two agents at each update instant. 
However, similar results hold even for larger values of $K$.

Let $x^{(N)}(t)$ denote the fraction of non-stubborn agents
having opinion $\{1\}$ at time $t \geq 0$. Clearly, $x^{(N)}(\cdot)$
is a Markov process with possible jumps at the 
points of a rate $N(1-\gamma_0-\gamma_1)$
Poisson process. 
The process $x^{(N)}(\cdot)$
jumps from the state $x$ to the state $x+1/N(1-\gamma_0-\gamma_1)$
when one of the non-stubborn agents having opinion $\{0\}$
becomes active (which happens with rate $N(1-\gamma_0-\gamma_1)(1-x)$)
and samples two agents with opinion $\{1\}$. The probability of sampling an agent
having opinion $\{1\}$ from the entire network is $(1-\gamma_0-\gamma_1)x+\gamma_1$.
Hence, the total rate at which the process transits from state $x$
to the state $x+1/N(1-\gamma_0-\gamma_1)$ is given by

\begin{multline}
q\brac{x \to x+\frac{1}{N(1-\gamma_0-\gamma_1)}}=N(1-\gamma_0-\gamma_1)(1-x)\\
\times [(1-\gamma_0-\gamma_1)x+\gamma_1]^2.
\end{multline}
Similarly, the rate of the other possible transition
is given by

\begin{multline}
q\brac{x \to x-\frac{1}{N(1-\gamma_0-\gamma_1)}}=N(1-\gamma_0-\gamma_1)x\\
\times  [(1-\gamma_0-\gamma_1)(1-x)+\gamma_0]^2.
\end{multline}
As in Theorem~\ref{thm:mean_field}, it can be shown from the above transition rates
that the process $x^{(N)}(\cdot)$
converges weakly to the mean field limit $x(\cdot)$
which satisfies the following differential equation

\begin{multline}
\dot{x}(t)=(1-x(t))[(1-\gamma_0-\gamma_1)x(t)+\gamma_1]^2\\
-x(t)
[(1-\gamma_0-\gamma_1)(1-x(t))+\gamma_0]^2.
\label{eq:mean_field_majority_stubborn}
\end{multline}
%
We now study the equilibrium distribution $\pi_N$
of the process $x^{(N)}(\cdot)$ for large $N$ via the equilibrium
points of the mean field $x(\cdot)$.

From~\eqref{eq:mean_field_majority_stubborn} we see that $\dot{x}(t)$
is a cubic polynomial in $x(t)$. Hence, the process $x(\cdot)$
can have at most three equilibrium points in $[0,1]$.
We first characterise the stability of these equilibrium points.

\begin{proposition}
\label{prop:meta_stubborn}
The process $x(\cdot)$ defined by~\eqref{eq:mean_field_majority_stubborn}
has at least one equilibrium point in $(0,1)$. Furthermore,
the number of stable equilibrium points of $x(\cdot)$ in $(0,1)$ is either two or one.
If there exists only one equilibrium point of $x(\cdot)$ in $(0,1)$, then the equilibrium point
must be globally stable (attractive).
\end{proposition}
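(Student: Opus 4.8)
The plan is to reduce all three assertions to elementary properties of the cubic on the right-hand side of \eqref{eq:mean_field_majority_stubborn}. Throughout I take $\gamma_0,\gamma_1>0$ (stubborn agents of both opinions are present) and $c:=1-\gamma_0-\gamma_1>0$ (there are non-stubborn agents), and write
\[
F(x) = (1-x)\bigl[cx+\gamma_1\bigr]^2 - x\bigl[c(1-x)+\gamma_0\bigr]^2 ,
\]
so the mean field solves $\dot x(t)=F(x(t))$. First I would evaluate $F(0)=\gamma_1^2>0$ and $F(1)=-\gamma_0^2<0$. By the intermediate value theorem $F$ has a zero in $(0,1)$; since $F(0),F(1)\neq 0$, the equilibria of $x(\cdot)$ lying in $(0,1)$ are exactly the zeros of $F$ there, which proves the existence claim. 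Expanding $F$ shows its leading coefficient is $-2c^2\neq 0$, so $F$ is a genuine cubic with at most three real zeros.

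Next I would read off stability from the sign of $F$: for $\dot x=F(x)$ an equilibrium is asymptotically stable exactly when $F$ changes sign there from $+$ to $-$, unstable when it changes from $-$ to $+$, and semistable (not stable) at a zero of even multiplicity. A cubic with negative leading coefficient realizes the sign pattern $+,-,+,-$ over its real zeros, so it has at most two sign changes of the type $+\to-$; hence $x(\cdot)$ has at most two stable equilibria in all of $\mathbb{R}$, a fortiori in $(0,1)$. Conversely, $F(0)>0>F(1)$ forces at least one $+\to-$ sign change strictly inside $(0,1)$, giving at least one stable equilibrium there. A short separate check --- using that at most three of the four sign-pattern intervals can intersect $(0,1)$, together with the pinned values $F(0)=\gamma_1^2$ and $F(1)=-\gamma_0^2$ --- shows that the non-generic double-root configurations are compatible with these boundary signs only when they still contribute the guaranteed simple $+\to-$ crossing. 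So the number of stable equilibria in $(0,1)$ is always one or two, which is the second assertion.

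For the last assertion I would note that $[0,1]$ is forward invariant for $\dot x=F(x)$: because $F(0)>0$ and $F(1)<0$, no trajectory starting in $[0,1]$ can leave it, and those started at the endpoints immediately enter $(0,1)$. Suppose now $x(\cdot)$ has a unique equilibrium $\beta$ in $(0,1)$, i.e.\ $F$ has a unique zero there; by the preceding discussion this zero is a $+\to-$ crossing, so $F>0$ on $[0,\beta)$ and $F<0$ on $(\beta,1]$. Hence every solution with $x(0)\in[0,1]$ is monotone and converges to $\beta$, so $\beta$ is globally attractive.

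The step I expect to be the main obstacle is the bookkeeping in the second paragraph: one must enumerate the admissible placements of the (up to three) real roots of $F$ relative to $(0,1)$ that are consistent with the pinned endpoint signs, and dispose of the degenerate double-root cases so that ``one or two stable equilibria'' holds without exception. Everything else is routine; in particular the weak convergence of $x^{(N)}(\cdot)$ to the solution of \eqref{eq:mean_field_majority_stubborn} is obtained exactly as in Theorem~\ref{thm:mean_field} and is assumed here.
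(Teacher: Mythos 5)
Your proposal is correct, and it shares the paper's opening move (evaluating $F(0)=\gamma_1^2>0$, $F(1)=-\gamma_0^2<0$, invoking the intermediate value theorem, and noting the leading coefficient $-2(1-\gamma_0-\gamma_1)^2$), but after that the two arguments diverge. The paper factors the cubic as $-2(1-\gamma_0-\gamma_1)^2(x-r_1)(x-r_2)(x-r_3)$ and argues by cases on root locations: when all three roots lie in $(0,1)$ it orders them and shows the two outer roots attract (hence at most two stable points), and when the equilibrium in $(0,1)$ is unique it computes the product of the roots, $\gamma_1^2/\bigl(2(1-\gamma_0-\gamma_1)^2\bigr)>0$, to show the remaining pair is either complex conjugate, both negative, both greater than $1$, or coincident with $r_1$, so that $(x-r_2)(x-r_3)\geq 0$ on $[0,1]$ and global convergence to $r_1$ follows. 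You instead use the scalar phase-line criterion (an isolated equilibrium of $\dot x=F(x)$ is attracting iff $F$ crosses from $+$ to $-$), count such crossings for a cubic with negative leading coefficient to get ``at most two'' uniformly over all degenerate root configurations, and for the global-stability claim you bypass the product-of-roots computation entirely: uniqueness of the zero $\beta$ in $(0,1)$ together with the pinned endpoint signs forces $F>0$ on $[0,\beta)$ and $F<0$ on $(\beta,1]$, and forward invariance of $[0,1]$ then yields monotone convergence from every initial condition. This is a genuine, and arguably cleaner, simplification of the paper's third step, since the dynamics are confined to $[0,1]$ and the location of the other two roots is irrelevant. One further remark: the ``short separate check'' you flag as the main obstacle is not actually needed --- $F$ is a nonzero polynomial, so it has finitely many zeros in $(0,1)$, its sign on the complementary subintervals is constant, and a sign sequence that starts $+$ near $0$ and ends $-$ near $1$ must contain a consecutive pair $+,-$; the zero separating them is the required stable equilibrium, with no enumeration of double-root placements.
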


\begin{proof}
Define $f(x)=(1-x)[(1-\gamma_0-\gamma_1)x+\gamma_1]^2-x[(1-\gamma_0-\gamma_1)(1-x)+\gamma_0]^2$.
%
%
Clearly, $f(0)=\gamma_1^2 > 0$ and $f(1)=-\gamma_0^2 < 0$.
Hence, there exists at least one root of $f(x)=0$ in $(0,1)$.
This proves the existence of an equilibrium point of $x(\cdot)$ in $(0,1)$.

Since $f(x)$ is a cubic polynomial and $f(0)f(1) < 0$,
either all three roots of $f(x)=0$ lie in $(0,1)$ or exactly one root of $f(x)=0$
lies in $(0,1)$.
Let the three (possibly complex and non-distinct) 
roots  of $f(x)=0$ be denoted by $r_1, r_2, r_3$, respectively.
By expanding $f(x)$ we see that
the coefficient of the cubic term is $-2(1-\gamma_0-\gamma_1)^2$.
Hence, $f(x)$ can be written as

\begin{equation}
f(x)=-2(1-\gamma_0-\gamma_1)^2(x-r_1)(x-r_2)(x-r_3).
\label{eq:mean_field_temp}
\end{equation}

We first consider the case when $0 < r_1, r_2, r_3 <1$ and not all of them are equal.
Let us suppose, without loss of generality, that
the roots are arranged in the increasing order, i.e., $0 < r_1 \leq r_2 < r_3 < 1$ or $0 < r_1 < r_2 \leq r_3 < 1$.
From~\eqref{eq:mean_field_temp} and~\eqref{eq:mean_field_majority_stubborn},
it is clear that,
if $x(t)> r_2$ and $x(t) > r_3$, then $\dot{x}(t) < 0$. Similarly, if $x(t)> r_2$ and $x(t) < r_3$,
then $\dot{x}(t) > 0$ .
Hence, if $x(0) > r_2$ then $x(t) \to r_3$ as $t \to \infty$.
Using similar arguments we have that for $x(0) < r_2$, $x(t) \to r_1$ as $t \to \infty$.
Hence,  $r_1,r_3$
are the stable equilibrium points of $x(\cdot)$. 
This proves that there exist at most two stable
equilibrium points of the mean field $x(\cdot)$.

Now suppose that there exists only one equilibrium point of $x(\cdot)$
in $(0,1)$.
This is possible either i) if there exists exactly one  real root of $f(x)=0$
in $(0,1)$, or ii) if all the roots of $f(x)=0$ are equal and lie in $(0,1)$.
Let $r_1$ be a root of $f(x)=0$ in $(0,1)$.
Now by expanding $f(x)$ from~\eqref{eq:mean_field_temp}, we see that
the product of the roots must be $\gamma_1^2/2(1-\gamma_0-\gamma_1)^2 >0$.
This implies that the other roots, $r_2$ and $r_3$,
must satisfy one of the following conditions: 1) $r_2, r_3 >1$,
2) $r_2, r_3 < 0$, 3) $r_2, r_3$ are complex conjugates, 4) $r_2=r_3=r_1$.

%
%
%
%
%
In all the above cases, we have
that $(x-r_2)(x-r_3) \geq 0$ for all $x \in [0,1]$ with equality if and only if $x=r_1=r_2=r_3$.
Hence, from~\eqref{eq:mean_field_temp} and~\eqref{eq:mean_field_majority_stubborn},
it is easy to see that $\dot{x}(t) > 0$ when $0 \leq x(t) < r_1$
and $\dot{x}(t) < 0$ when $1 \geq x(t) > r_1$.
This implies that $x(t) \to r_1$ for all $x(0) \in [0,1]$.
In other words, $r_1$ is globally stable. 
\end{proof}

In the next proposition, we provide the conditions on $\gamma_0$ and $\gamma_1$
for which there exist multiple stable equilibrium points of the mean field $x(\cdot)$.

\begin{proposition}
\label{prop:meta_condition}
There exist two distinct stable equilibrium points of
the mean field $x(\cdot)$ in $(0,1)$ if and only if

\begin{enumerate}
\item $D(\gamma_0,\gamma_1)=(\gamma_0-\gamma_1)^2+3(1-2\gamma_0-2\gamma_1) > 0$
\item $0 < z_1, z_2 < 1$, where

\begin{align}
z_1&= \frac{(3-\gamma_0-5\gamma_1)+ \sqrt{D(\gamma_0,\gamma_1)}}{6(1-\gamma_0-\gamma_1)},\\
z_2&= \frac{(3-\gamma_0-5\gamma_1)- \sqrt{D(\gamma_0,\gamma_1)}}{6(1-\gamma_0-\gamma_1)}.
\end{align}

\item $f(z_1)f(z_2) \leq 0$, where $f(x)=(1-x)[(1-\gamma_0-\gamma_1)x+\gamma_1]^2-x[(1-\gamma_0-\gamma_1)(1-x)+\gamma_0]^2$.
\end{enumerate}
If any one of the above conditions is not satisfied 
then $x(\cdot)$ has a unique, globally stable
equilibrium point in $(0,1)$.
\end{proposition}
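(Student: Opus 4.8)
The plan is to reduce the statement to a question about the real roots of the cubic $f$, and then apply elementary calculus to $f$. Recall from the proof of Proposition~\ref{prop:meta_stubborn} that $f(0)=\gamma_1^2>0$, $f(1)=-\gamma_0^2<0$, the coefficient of the cubic term of $f$ equals $-2(1-\gamma_0-\gamma_1)^2<0$ (we take $1-\gamma_0-\gamma_1>0$, since otherwise there are no non-stubborn agents and the dynamics degenerates), and that $x(\cdot)$ has two distinct stable equilibria in $(0,1)$ precisely when $f$ has three distinct real roots $r_1<r_2<r_3$ all lying in $(0,1)$ --- in which case $r_1,r_3$ are stable and $r_2$ unstable --- whereas if $f$ has a multiple root or a complex-conjugate pair, then (again as shown there) $x(\cdot)$ has a single, globally attracting equilibrium in $(0,1)$. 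Thus it suffices to show that $f$ has three distinct real roots in $(0,1)$ if and only if conditions (1)--(3) hold, and that otherwise the cubic has exactly one root in $(0,1)$ which attracts $[0,1]$.

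Second, I would analyse the shape of $f$ through $f'$. Differentiating the expanded form of $f$ gives a quadratic $f'$ with negative leading coefficient; a short computation shows that its two roots are exactly $z_1$ and $z_2$ as defined in the statement, and that the quantity under the radical, after pulling out the common factor $(1-\gamma_0-\gamma_1)^2$, is precisely $D(\gamma_0,\gamma_1)$. Hence $f$ has two distinct critical points if and only if $D(\gamma_0,\gamma_1)>0$, which is condition (1). When this holds, $z_1>z_2$, and $f$ is decreasing on $(-\infty,z_2)$, increasing on $(z_2,z_1)$, and decreasing on $(z_1,\infty)$, so $z_2$ is a local minimum, $z_1$ a local maximum, and $f(z_1)>f(z_2)$.

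Third, I would assemble the equivalence. Given $D>0$: a cubic with negative leading coefficient has three distinct real roots if and only if its local maximum is positive and its local minimum is negative, i.e.\ $f(z_1)>0>f(z_2)$; since $f(z_1)>f(z_2)$, this is equivalent to $f(z_1)f(z_2)<0$, which is condition (3) (the equality case $f(z_1)f(z_2)=0$ gives a double root and hence a single attracting equilibrium). It remains to see that, assuming $D>0$ and $f(z_1)f(z_2)<0$, all three roots lie in $(0,1)$ if and only if $0<z_2<z_1<1$, i.e.\ condition (2). For the ``if'' direction, monotonicity of $f$ together with $f(0)>0$ places the smallest root in $(0,z_2)$, the middle root in $(z_2,z_1)$, and, with $f(1)<0$, the largest in $(z_1,1)$. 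For the ``only if'' direction I would use the interlacing $r_1\le z_2\le r_2\le z_1\le r_3$: if $z_2\le 0$ then $r_1\le 0$, and combined with $f(0)>0$ this forces a second root $\le 0$, leaving at most one root in $(0,1)$; symmetrically $z_1\ge 1$ forces at least two roots $\ge 1$. Finally, for the last sentence of the proposition I would run through the three failure modes ($D\le 0$; $D>0$ but some $z_i\notin(0,1)$; $f(z_1)f(z_2)>0$), check in each case that $f$ has exactly one root $\rho\in(0,1)$, and note that the sign pattern of $f$ on $[0,1]$ (positive on $[0,\rho)$, negative on $(\rho,1]$, which follows from $f(0)>0$, $f(1)<0$ and there being no other root in $(0,1)$) makes $\rho$ globally attractive.

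All the computations here are routine; the one place that needs care is the bookkeeping in the third step --- correctly identifying which critical point is the maximum, and getting the interlacing/sign argument right so that condition (2) is tied to ``all roots in $(0,1)$'' rather than merely ``at least one root in $(0,1)$''. I expect this sign-chasing, rather than any conceptual difficulty, to be the main obstacle.
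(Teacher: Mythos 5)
Your overall route is the same as the paper's: use Proposition~\ref{prop:meta_stubborn} to reduce the question to counting real roots of the cubic $f$ in $(0,1)$, then translate that into conditions on the quadratic $f'$ (discriminant $D>0$, critical points $z_1,z_2\in(0,1)$, and the sign condition on $f(z_1)f(z_2)$). In fact you supply more detail than the paper does (the paper simply asserts the equivalence with the conditions on $f'$), and your interlacing/sign bookkeeping for tying condition (2) to ``all three roots in $(0,1)$'' is correct.

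However, there is a genuine discrepancy in how you treat the borderline case, and it contradicts the statement you are asked to prove. Condition (3) of the proposition is $f(z_1)f(z_2)\leq 0$, not $<0$, and your parenthetical claim that ``the equality case $f(z_1)f(z_2)=0$ gives a double root and hence a single attracting equilibrium'' is wrong in the regime where conditions (1)--(2) hold. If, say, $f(z_2)=0$ with $0<z_2<z_1<1$, then $f$ factors as $-2(1-\gamma_0-\gamma_1)^2(x-z_2)^2(x-r_3)$ with $z_2<r_3<1$, and the flow satisfies $\dot{x}>0$ on $[0,z_2)\cup(z_2,r_3)$ and $\dot{x}<0$ on $(r_3,1]$: trajectories started below $z_2$ converge to $z_2$, while those started above $z_2$ converge to $r_3$. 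So there are two distinct equilibria in $(0,1)$, neither of which is globally attracting; in the paper's convention (see the case $r_1\leq r_2<r_3$ or $r_1<r_2\leq r_3$ in the proof of Proposition~\ref{prop:meta_stubborn}, where $r_1$ and $r_3$ are both called stable) this counts as the two-stable-equilibria regime, which is exactly why the proposition uses the non-strict inequality. Relatedly, your opening reduction misquotes Proposition~\ref{prop:meta_stubborn}: it guarantees a single, globally attracting equilibrium only when $f$ has exactly one real root in $(0,1)$ or when all three roots coincide, not whenever $f$ has a multiple root. With the equality case folded back into the ``two distinct stable equilibria'' side (and ``stable'' read in the paper's one-sided sense), the rest of your argument goes through and matches the intended proof.
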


\begin{proof}
From Proposition~\ref{prop:meta_stubborn}, we have seen that $x(\cdot)$ has two stable equilibrium points
in $(0,1)$ if and only if $f(x)=0$ has three real roots in $(0,1)$ among which
at least two are distinct.
This happens if and only if $f'(x)=0$
has two distinct real roots $z_1, z_2$ in the interval $(0,1)$ and
$f(z_1) f(z_2) \leq 0$. Since $f'(x)$ is a quadratic polynomial
in $x$, the above conditions are satisfied if and only if
%
%
%
\begin{enumerate}
\item The discriminant of $f'(x)=0$ is positive. This corresponds
to the first condition of the proposition.

\item The two roots $z_1,z_2$ of $f'(x) =0$ must lie in $(0,1)$.
This corresponds to the second condition of the proposition.

\item $f(z_1) f(z_2) \leq 0$. This is
the third condition of the proposition. 
\end{enumerate}

Clearly, if any one of the above conditions is not satisfied,
then $x(\cdot)$ has a unique equilibrium point in $(0,1)$.
According to Proposition~\ref{prop:meta_stubborn} this equilibrium
point must be globally stable.
\end{proof}

Hence,
depending on the values of $\gamma_0$ and $\gamma_1$
there may exist of multiple
stable equilibrium points of the mean field $x(\cdot)$. However,
for every finite $N$, the process $x^{(N)}(\cdot)$
has a unique stationary distribution $\pi_N$
(since it is irreducible on a
finite state space). 
In the next result, we establish that
any limit point of the sequence of stationary
probability distributions $(\pi_N)_N$
is a convex combination of the Dirac measures concentrated on the
equilibrium points of the mean field $x(\cdot)$ in $[0,1]$.

\begin{theorem}
Any limit point of the sequence of probability measures $(\pi_N)_N$
is a convex combination of the Dirac measures 
concentrated on the equilibrium points of $x(\cdot)$ in $[0,1]$.
In particular, if there exists a unique equilibrium point 
$r$ of $x(\cdot)$ in $[0,1]$
then $\pi_N \Rightarrow \delta_r$, where $\delta_{r}$ denotes the
Dirac measure concentrated at the point $r$. 
\end{theorem}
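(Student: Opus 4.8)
The plan is to use compactness of the state space together with convergence of generators. Since $x^{(N)}(\cdot)$ lives on the compact set $[0,1]$, the sequence $(\pi_N)_N$ is automatically tight, so it suffices to show that every weak subsequential limit $\pi$ is concentrated on the zero set of the drift of \eqref{eq:mean_field_majority_stubborn}; the conclusion then follows because that zero set is finite and a probability measure supported on a finite set is a convex combination of Dirac masses at its points.

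Write $f(x)$ for the cubic drift on the right-hand side of \eqref{eq:mean_field_majority_stubborn} (the polynomial already introduced in Proposition~\ref{prop:meta_stubborn}), let $M_N = N(1-\gamma_0-\gamma_1)$ be the number of non-stubborn agents, and let $\mathcal{L}_N$ be the generator of $x^{(N)}(\cdot)$. From the transition rates stated above, for $\varphi \in C^2([0,1])$,
\begin{align*}
\mathcal{L}_N \varphi(x) &= M_N(1-x)\sbrac{(1-\gamma_0-\gamma_1)x+\gamma_1}^2\brac{\varphi\brac{x+\tfrac{1}{M_N}}-\varphi(x)}\\
&\quad + M_N x\sbrac{(1-\gamma_0-\gamma_1)(1-x)+\gamma_0}^2\brac{\varphi\brac{x-\tfrac{1}{M_N}}-\varphi(x)}.
\end{align*}
A second-order Taylor expansion gives $\sup_{x\in[0,1]}\abs{\mathcal{L}_N\varphi(x)-f(x)\varphi'(x)}=O(1/N)$, the error being controlled by $\norm{\varphi''}$ and the boundedness of the rate coefficients on $[0,1]$; the endpoints cause no difficulty because at $x=0$ (resp. $x=1$) only the up-jump (resp. down-jump) survives and its contribution still converges to $f(0)\varphi'(0)=\gamma_1^2\varphi'(0)$ (resp. $f(1)\varphi'(1)=-\gamma_0^2\varphi'(1)$).

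Next I would invoke stationarity of $\pi_N$, which for a finite irreducible chain means $\int_0^1 \mathcal{L}_N\varphi\,d\pi_N=0$ for every $\varphi\in C^2([0,1])$. Passing to a subsequence with $\pi_{N_k}\Rightarrow\pi$ and combining the uniform estimate above with continuity of $x\mapsto f(x)\varphi'(x)$ yields $\int_0^1 f(x)\varphi'(x)\,d\pi(x)=0$ for all such $\varphi$. The decisive step is then to plug in the single test function $\varphi_0(x)=\int_0^x f(s)\,ds$, which is a polynomial and hence admissible; this gives $\int_0^1 f(x)^2\,d\pi(x)=0$, so $\pi$ is concentrated on $Z:=\cbrac{x\in[0,1]:f(x)=0}$, the set of equilibrium points of $x(\cdot)$. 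As $f$ is cubic, $Z$ is finite, and $\pi=\sum_{r\in Z}\pi(\cbrac{r})\delta_r$, proving the first assertion. For the "in particular" part, if $Z=\cbrac{r}$ then every subsequential limit of $(\pi_N)_N$ is $\delta_r$, and since the set of probability measures on $[0,1]$ is compact in the weak topology, the whole sequence converges, $\pi_N\Rightarrow\delta_r$; note that by Proposition~\ref{prop:meta_stubborn} such a unique $r$ is then globally stable, consistent with this picture.

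The only genuinely technical point is the uniform generator estimate and the verification that it may be combined with weak convergence of $\pi_N$ to pass the stationarity identity to the limit; everything else is either soft (compactness/tightness) or a one-line computation (the choice $\varphi_0'=f$). I do not expect any fine control of the metastable switching to be required here: the statement only identifies the \emph{support} of the limit points, not the weights in the convex combination, and pinning down those weights is precisely what would demand a much more delicate large-deviations analysis of the metastable dynamics.
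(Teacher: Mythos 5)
Your proposal is correct, but it proves the theorem by a genuinely different route than the paper. The paper argues at the level of the flow: it invokes the mean-field convergence result to assert that any weak limit point $\pi$ of $(\pi_N)_N$ is invariant under the maps $\alpha \mapsto x(t,\alpha)$, then lets $t \to \infty$ inside the integral (dominated convergence plus continuity of the test function) and uses Proposition~\ref{prop:meta_stubborn} --- every trajectory converges to one of the at most three equilibria --- to conclude $\pi = c_1\delta_{r_1}+c_2\delta_{r_2}+c_3\delta_{r_3}$. You instead work at the level of generators: stationarity gives $\int \mathcal{L}_N\varphi\, d\pi_N = 0$, the uniform $O(1/N)$ Taylor estimate transfers this to $\int f\varphi'\, d\pi = 0$ for every $\varphi \in C^2$, and the one-dimensional trick $\varphi_0' = f$ yields $\int f^2\, d\pi = 0$, so $\pi$ charges only the zero set of the cubic drift. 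Each step of yours checks out (the stationarity identity is exact for the finite-state chain, the rates are polynomial so the error is controlled by $\|\varphi''\|$, and the endpoint degeneracies are harmless as you note), and the tightness-plus-unique-subsequential-limit argument for the ``in particular'' part is standard. What your approach buys is self-containedness: you need neither the process-level weak convergence $x^{(N)}(\cdot) \Rightarrow x(\cdot)$ nor the global convergence of every ODE trajectory to an equilibrium, only the generator expansion; it also cleanly isolates why nothing beyond support identification is obtained. What the paper's approach buys is a slightly more informative description of the limit, since the weights appear explicitly as $\pi(N_{r_i})$, the masses of the basins of attraction, which hints at where a finer (large-deviations) analysis would have to act; your method, relying on the sign-agnostic identity $\int f^2 d\pi = 0$, cannot distinguish stable from unstable equilibria even in principle. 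The only implicit assumption worth flagging in your write-up is $\gamma_0+\gamma_1<1$ (so that the cubic coefficient $-2(1-\gamma_0-\gamma_1)^2$ is nonzero and the zero set is finite), which is of course the standing assumption of the section.
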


\begin{proof}
We first note that since the sequence of probability measures $(\pi_N)_N$
is defined on the compact space $[0,1]$, it must be tight. Hence,
Prokhorov's theorem implies that $(\pi_N)_N$ is relatively compact.
Let $\pi$ be any limit point of the sequence $(\pi_N)_N$. Then by
the mean field convergence result we know that $\pi$
must be an invariant distribution of the maps $\alpha \mapsto x(t,\alpha)$
for all $t \geq 0$, i.e., $\int \varphi(x(t,\alpha))d\pi(\alpha)=\int \varphi(\alpha)d\pi(\alpha)$,
%
%
for all $t \geq 0$, and all continuous (and hence bounded)
functions $\varphi: [0,1] \mapsto \mb{R}$. In the above,
$x(t,\alpha)$ denotes the process $x(\cdot)$ started at $x(0)=\alpha$.  
Hence we have

\begin{align}
\int \varphi(\alpha)d\pi(\alpha) &= \lim_{t \to \infty} \int \varphi(x(t,\alpha))d\pi(\alpha)\\
                                 &= \int \varphi\brac{\lim_{t \to \infty} x(t,\alpha)}d\pi(\alpha).
                                 \label{eq:inter_conv}
\end{align}
The second equality follows from the first by the Dominated
convergence theorem and the continuity of $\varphi$.
Now, let $r_1, r_2$, and $r_3$ denote the three equilibrium
points of the mean field $x(\cdot)$. Hence, by Proposition~\ref{prop:meta_stubborn}
we have that for each $\alpha \in [0,1]$, $\varphi(\lim_{t \to \infty}x(t,\alpha))
=\varphi(r_1) I_{N_{r_1}}(\alpha)+\varphi(r_2) I_{N_{r_2}}(\alpha)+\varphi(r_3) I_{N_{r_3}}(\alpha)$, where
for $i=1,2,3$, $N_{r_i} \in [0,1]$ denotes the set for which if $x(0) \in N_{r_i}$
then $x(t) \to r_i$ as $t \to \infty$, and $I$ denotes the indicator function.
Hence, by~\eqref{eq:inter_conv} we have that for all continuous functions
$\varphi: [0,1] \mapsto \mb{R}$

\begin{equation}
\int \varphi(\alpha)d\pi(\alpha) = \varphi(r_1) \pi(N_{r_1})+\varphi(r_2) \pi(N_{r_2})+\varphi(r_3) \pi(N_{r_3}).
\end{equation}
This proves that $\pi$ must be of the form
$\pi=c_1 \delta_{r_1}+c_2 \delta_{r_2}+c_3 \delta_{r_3}$,
where $c_1, c_2, c_3 \in [0,1]$ are such that $c_1+c_2+c_3=1$.
This completes the proof.
\end{proof}

Thus, according to the above theorem, if 
there exists a unique equilibrium point of the process $x(\cdot)$ in [0,1],
then the sequence of stationary distributions $(\pi_N)_N$
concentrates on that equilibrium point as $N \to \infty$. 
In other words, for large $N$, the fraction
of non-stubborn agents having opinion $\{1\}$ (at equilibrium) will
approximately be equal to the unique equilibrium point of the mean field.

{\bf Simulation Results}:
In Figure~\ref{fig:equilibrium_majority_stubborn},
we plot the equilibrium point of $x(\cdot)$ (when it is unique) as a function
of the fraction $\gamma_1$ of agents
having opinion $\{1\}$ who are stubborn keeping the fraction $\gamma_0$ of stubborn agents
having opinion $\{0\}$ fixed.
We choose the parameter values so that there exists a unique equilibrium point of $x(\cdot)$
in $[0,1]$ (such parameter settings can be obtained using the conditions of
Proposition~\ref{prop:meta_condition}).
We see that as $\gamma_1$ is increased in the range $(0,1-\gamma_0)$,
the equilibrium point shifts closer to unity. This is expected since increasing
the fraction of stubborn agents with opinion $\{1\}$ increases
the probability with which a non-stubborn agent samples
an agent with opinion $\{1\}$ at an update instant.

\begin{figure}
 \centering
 \includegraphics[width=60mm]{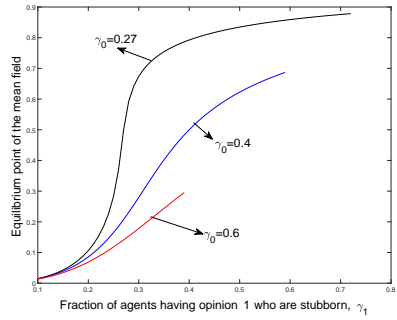}
 \caption{Majority rule with stubborn agents: Equilibrium point of $x(\cdot)$  as a function of
 $\gamma_1$ for different values of $\gamma_0$.}
 \label{fig:equilibrium_majority_stubborn}
 \end{figure}
 
 If there exist multiple equilibrium points of the process $x(\cdot)$
then the convergence $x^{(N)}(\cdot) \Rightarrow x(\cdot)$ implies
 that at steady state the process $x^{(N)}(\cdot)$
spends intervals
near the region corresponding to one of the stable equilibrium points of $x(\cdot)$. 
Then due to some
rare events, it reaches, via the unstable equilibrium point,
to a region corresponding to the other stable equilibrium point of $x(\cdot)$.
This fluctuation repeats giving the process $x^{(N)}(\cdot)$ a unique stationary distribution.
This behavior is formally known as {\em metastability}.

To demonstrate metastability, we simulate a network with $N=100$
agents and $\gamma_0=\gamma_1=0.2$. For the above parameters, the mean field $x(\cdot)$ 
has two stable equilibrium points
at $0.127322$ and $0.872678$.
In Figure~\ref{fig:sample_path_meta}, we show the sample path of the process
$x^{(N)}(\cdot)$. We see that at steady state the process switches back and forth
between regions corresponding to the stable equilibrium points of $x(\cdot)$.
This provides numerical evidence of the metastable behavior of the finite system.

\begin{figure}
 \centering
 \includegraphics[width=60mm]{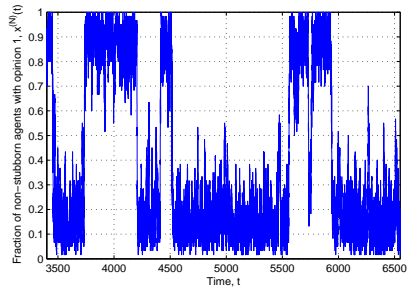}
 \caption{Majority rule with stubborn agents: Sample path of the process $x^{(N)}(\cdot)$ with
 $N=100$, $\gamma_0=\gamma_1=0.2$.}
 \label{fig:sample_path_meta}
 \end{figure}
 
\section{Proof of Theorem~\ref{thm:voter_consensus}}
\label{pf:voter_consensus}

Let $T=T_0\wedge T_N$ denote the random time to reach 
consensus.
Then we have

\begin{align}
T&=\sum_{k=1}^{N-1}\sum_{j=1}^{Z_k} M_{k,j},
\end{align}
where $Z_k$ denotes the number of visits to state $k$ before absorption
and $M_{k,j}$ denotes the time spent in the $j^{\textrm{th}}$ visit to state $k$.
Clearly, the random variables $Z_k$ and $(M_{k,j})_{j\geq 1}$ are independent with each
$M_{k,j}$ being an exponential random variable with rate $(q_0+q_1)k(N-k)/N$.
Hence, using Wald's identity we have

\begin{align}
t_{N}(\alpha)&=\e_{\floor{\alpha N}}\sbrac{T}\\
					&=\sum_{k=1}^{N-1} \e_{\floor{\alpha N}}\sbrac{Z_k} \e_{\floor{\alpha N}}\sbrac{M_{k,j}}\\
				    &=\frac{1}{q_0+q_1}\sum_{k=1}^{N-1}\brac{\frac{1}{k}+\frac{1}{N-k}} \e_{\floor{\alpha N}}\sbrac{Z_k}. \label{eq:consensus}
\end{align}
We now proceed to find lower and upper bounds of $t_N(\alpha)$.

Let $A=\cbrac{\omega: T_N(\omega) < T_0(\omega)}$ denote the event that the Markov chain
gets absorbed in state $N$. 
We have

\begin{align}
\label{eq:num_visits_break}
\expect[\floor{\alpha N}]{Z_k}=\expect[\floor{\alpha N}]{Z_k \vert A} \prob[\floor{\alpha N}]{A}
                     &+\expect[\floor{\alpha N}]{Z_k \vert A^c} (1-\prob[\floor{\alpha N}]{A}).
\end{align}

\noindent {\em Lower bound of $t_N(\alpha)$}: We first obtain a lower bound for $t_N(\alpha)$.
Clearly, we have the following

\begin{align*}
Z_k \vert A & \geq 1 \quad \forall k \geq \floor{\alpha N}\\
Z_k \vert A^c & \geq  0 \quad \forall k > \floor{\alpha N}\\
Z_k \vert A & \geq 0 \quad \forall k < \floor{\alpha N}\\
Z_k \vert A^c & \geq 1 \quad \forall k \leq \floor{\alpha N}.
\end{align*}
Using the above in~\eqref{eq:num_visits_break} we have
\begin{align*}
\expect[\floor{\alpha N}]{Z_k}& \geq  \prob[\floor{\alpha N}]{A}\indic{k \geq \floor{\alpha N}}+ (1-\prob[\floor{\alpha N}]{A})\indic{k \leq \floor{\alpha N}},
\end{align*}
where $\mathbbm{1}_{\Omega}$ denotes the indicator function for the set $\Omega$.
Using the above in \eqref{eq:consensus}, we have

\begin{align}
t_{N}(\alpha)&\geq \frac{\prob[\floor{\alpha N}]{A}}{q_0+q_1}\sum_{k=\floor{\alpha N}}^{N-1}\brac{\frac{1}{k}+\frac{1}{N-k}}+ \frac{1-\prob[\floor{\alpha N}]{A}}{q_0+q_1}\sum_{k=	1}^{\floor{\alpha N}}\brac{\frac{1}{k}+\frac{1}{N-k}}\\
					&\geq \frac{1}{q_0+q_1} \sum_{k=1}^{N(\alpha \wedge (1-\alpha))} \frac{1}{k}\\
					& > \frac{1}{q_0+q_1} \log(N(\alpha \wedge (1-\alpha))).
\end{align}

\noindent {\em Upper bound for $t_{N}(\alpha)$}: We first obtain
an upper bound on $\e_x\sbrac{Z_k;A}$ for $k \geq x$ with any $0 < x < N$.
Given $A$,
let $\zeta_k$ denote the number of times the embedded chain $\tilde X^{(N)}$ jumps
from $k$ to $k-1$ before absorption.
It is easy to observe that conditioned on $A$, the embedded chain 
$\tilde X^{(N)}$ is a Markov chain with jump
probabilities given by

\begin{equation}
p^{A}_{k,k+1}=1-p^{A}_{k,k-1}=p\frac{\prob[k+1]{A}}{\prob[k]{A}}\overset{(a)}{=}p\frac{1-r^{k+1}}{1-r^{k}},
\label{eq:cond_jump}
\end{equation}
where equality (a) follows from Lemma~\ref{lem:stop}.
Furthermore, we have

\begin{equation}
\label{eq:num_visits}
Z_k\vert A=
			1+\zeta_{k}+\zeta_{k+1},  \text{ for } x \leq k \leq N-1.
\end{equation}
The above relationship follows by observing that the facts that 
(i) the states $k \geq x$ are visited at least once and
(ii) the number of visits to state $k$ 
is the sum of the numbers of jumps of $\tilde X^{(N)}$ to the left and to the right
from state $k$.

Given $A$, we must have $\zeta_N=0$.
Let $\xi_{l,k}$ denote the random number of left-jumps from state $k$ between 
$l^{\textrm{th}}$ and $(l+1)^{\textrm{th}}$ left-jumps from state $k+1$. Then 
$(\xi_{l,k})_{l\geq 0}$ are i.i.d  with geometric distribution having mean $p^{A}_{k,k-1}/p^{A}_{k,k+1}$.
Moreover, we have the following recursion

\begin{equation}
\zeta_{k}=	
			\sum_{l=0}^{\zeta_{k+1}} \xi_{l,k},  \text{ for } x \leq k \leq N-1,
\end{equation}
(the above sum starts from $l=0$ because for $k \geq x$ 
left jumps from $j$ can occur even before the chain visits $j+1$ for the first time)
Thus, we see that $(\zeta_k)_{x\leq k \leq N}$ forms a branching process
with immigration of one individual in each generation. 
Applying Wald's identity to solve the above recursion we have
for $x \leq k \leq N-1$

\begin{align}
\e_x\sbrac{\zeta_{k}}&=	
			\sum_{n=k}^{N-1} \prod_{i=k}^{n} \frac{p_{i,i-1}^A}{p_{i,i+1}^A} \nonumber\\
			&\overset{(a)}{=}\sum_{n=k}^{N-1} \prod_{i=k}^{n} \frac{r(1-r^{i-1})}{1-r^{i+1}} \nonumber\\
			&=\frac{r(1-r^{k-1})(1-r^{N-k})}{(1-r)(1-r^N)}, \label{eq:left_jump}
\end{align}
where equality (a) follows from \eqref{eq:cond_jump}.
Taking expectation in \eqref{eq:num_visits} and substituting \eqref{eq:left_jump}
we obtain that for $x \leq k \leq N-1$

\begin{align}
\e_x\sbrac{Z_k\vert A}&=1+\e_x\sbrac{\zeta_{k}}+\e_x\sbrac{\zeta_{k+1}} \nonumber\\
								   &=\frac{1+r}{1-r}\frac{(1-r^{N-k})(1-r^k)}{1-r^N} \nonumber\\
								   &\leq \frac{1+r}{1-r}.
\label{eq:upper_half}
\end{align}
But we also have $$\e_x\sbrac{Z_k;A}\leq \e_x\sbrac{Z_k\vert A}\leq \frac{1+r}{1-r}.$$
which provides the required bound on $\e_x\sbrac{Z_k; A}$.
We note that the above bound is independent of $x$. 
In particular, it is true when $x=\floor{\alpha N}$ and $k \geq \floor{\alpha N}$.

For $1 \leq k < x$ we have

\begin{align*}
\e_x\sbrac{Z_k; A}&= \e_x\sbrac{Z_k; T_k < T_N < T_0},\\
					&=\e_x\sbrac{Z_k\vert T_k < T_N < T_0} \p_x\brac{T_k < T_N < T_0},\\
					&\overset{(a)}{=} \e_k\sbrac{Z_k\vert T_N < T_0} \p_x\brac{T_k < T_N < T_0},\\
					& \overset{(b)}{\leq} \frac{1+r}{1-r},
\end{align*}
where the equality $(a)$ follows from the Markov property
and inequality $(b)$ follows from \eqref{eq:upper_half}.
Hence, combining all the results above we have that for all $0 < k < N$
$$\e_x\sbrac{Z_k; A}\leq \frac{1+r}{1-r}.$$

Using similar arguments for the process conditioned on $A^c$, 
it follows that for any $0 <x < N$ and any $0 < k \leq x$ we have
\begin{align*}
\e_x\sbrac{Z_k; A^c} & \leq \e_x\sbrac{Z_k \vert A^c} \\
								&=\frac{\bar r+1}{\bar r-1}\frac{(\bar r^{N-k}-1)(\bar r^k-1)}{\bar r^N-1} \\
								& \leq \frac{1+r}{1-r}.
\end{align*}
%
%
Furthermore, for $N > k > x$ we have

\begin{align*}
\e_x\sbrac{Z_k; A^c}&=\e_x\sbrac{Z_k; T_k < T_0 < T_N}\\
										 &=\e_x\sbrac{Z_k \vert T_k < T_0 < T_N} \p_x(T_k < T_0 < T_N)\\
										 &=\e_k\sbrac{Z_k \vert  T_0 < T_N} \p_x(T_k < T_0 < T_N)\\
                                              &\leq\frac{1+r}{1-r}.
\end{align*}

Combining all the above results we have $\e_{\floor{\alpha N}}\sbrac{Z_k} \leq (1+r)/(1-r)$
for all $0 < k < N$. Hence
from \eqref{eq:consensus} we obtain

\begin{align*}
t_N(\alpha)&\leq \frac{2}{q_0+q_1}  \frac{1+r}{1-r} \sum_{k=1}^{N-1} \frac{1}{k},\\
				  & \leq \frac{2}{q_0+q_1}  \frac{1+r}{1-r} (\log(N-1)+1),
\end{align*}
which completes the proof.

\section{Proof of Theorem~\ref{thm:mean_field}}

The process $x^{(N)}(\cdot)$
jumps from the state $x$ to the state $x+1/N$
when 
one of the $N(1-x)$ agents having opinion $\{0\}$
updates (with probability $q_0$) its opinion by interacting with
an agent with opinion $\{1\}$. 
Since the agents update their opinions at points of independent
unit rate Poisson processes,
the rate at which one of the $N(1-x)$ agents having opinion $\{0\}$ 
decides to update its opinion is $N(1-x)q_0$. 
The probability with which the updating agent interacts with an agent
with opinion $\{1\}$ is $x$. Hence, the total rate of transition from $x$
to $x+1/N$ is given by  $r(x \to x+1/N)= {q_0 N x(1-x)}$.
%
%
Similarly, the rate of transition from $x$ to $x-1/N$
is given by $r(x \to x-1/N)= {q_1 N x(1-x)}$.
%
%
From the above transition rates
it can be easily seen  that 
the generator of the process $x^{(N)}(\cdot)$
converges uniformly as $N \to \infty$ to the generator of the
deterministic process $x(\cdot)$ defined by \eqref{eq:mean_field}.
%
From the classical results (see e.g., Kurtz~\cite{Kurtz_1970}), 
the theorem follows.

\section{Proof of Lemma~\ref{lem:monotone}}

We can write $g_K(x)=\phi(\psi(x))$, where $\psi(x)=\frac{x}{1-x}:[0,1) \to [0,\infty)$ and 
$$\phi(t)=\frac{\sum_{i=K+1}^{2K}  \binom{2K}{i} t^i}{\sum_{i=K+1}^{2K}  \binom{2K}{i} t^{2K+1-i}}=
\frac{\sum_{i=K+1}^{2K}  \binom{2K}{i} t^i}{\sum_{i=1}^{K}  \binom{2K}{i-1} t^{i}}.$$ 
Clearly, $\psi(x):[0,1) \to [0,\infty)$  is strictly increasing. Thus, it is sufficient to show that
$\phi:(0,\infty) \to (0,\infty)$ is also strictly increasing.
Clearly, $\phi'(t)=A(t)/(\sum_{i=1}^{K}  \binom{2K}{i-1} t^{i})^2$,
where 
\begin{align}
A(t)&=\brac{\sum_{i=K+1}^{2K}  i\binom{2K}{i} t^{i-1}}\brac{\sum_{i=1}^{K}  \binom{2K}{i-1} t^{i}} \nonumber\\
       &-\brac{\sum_{i=K+1}^{2K}  \binom{2K}{i} t^{i}}\brac{\sum_{i=1}^{K}  i\binom{2K}{i-1} t^{i-1}}\nonumber\\
       &= \sum_{j=K+1}^{3K-1} M_j t^j \label{eq:jbound}
\end{align}
with
$$M_j=\sum_{i=\min(1,j+1-2K)}^{\max(K,j-K)} \binom{2K}{i-1} \binom{2K}{j-i+1}(j+1-2i).$$
We note that in the above sum the running variable $i$ satisfies $i \leq \max(K,j-K)$.
Furthermore, from \eqref{eq:jbound}, we have that $K+1 \leq j \leq 3K-1$.
Hence, we have $i \leq \max(K,j-K) < \frac{j+1}{2}$ for any $K \geq 1$. 
This implies that $M_j > 0$ for all $j$ satisfying $K+1 \leq j \leq 3K-1$
Hence, $\phi'(t) > 0$, $\forall t > 0$,
which implies that $\phi(t)$ is strictly increasing in $(0,\infty)$. 

\section{Proof of Theorem~\ref{thm:phase_transit}}
\label{pf:phase_transit}

From the first step analysis of the embedded chain $\tilde X^{(N)}(\cdot)$ 
it follows that
%
%

\begin{equation}
E_N(n)=p_{n,n+1} E_N(n+1)+p_{n,n-1} E_N(n-1),
\label{eq:first_step}
\end{equation}
which upon rearranging gives

\begin{equation}
E_N(n+1)-E_N(n)= \frac{p_{n,n-1}}{p_{n,n+1}}(E_N(n)-E_N(n-1)).
\label{eq:rec_exit}
\end{equation}
Putting $D_N(n)=E_N(n+1)-E_N(n)$ we find that~\eqref{eq:rec_exit}
reduces to a first order recursion in $D_N(n)$ which satisfies the following
relation for $1 \leq n \leq N-1$

\begin{equation}
D_N(n)=r \frac{1}{g_K(n/N)} D_N(n-1).
\label{eq:drec}
\end{equation}
To compute $D_N(0)$ we use the boundary conditions $E_N(0)=0$
and $E_N(N)=1$, which imply that $\sum_{n=0}^{N-1} D_N(n)=1$.
Hence, we have

\begin{equation}
D_N(0)=\frac{1}{\sum_{t=0}^{N-1} \prod_{j=1}^t \frac{r}{g_K(j/N)}},
\end{equation} 

Thus, using $E_N(n)=\sum_{k=0}^{n-1} D_N(k)$ we have the required
expression for $E_N(n)$ for all $0 \leq n \leq N$.
%

It is also important to note that $D_N$ defines a probability distribution
on the set $\cbrac{0,1,\ldots,N-1}$. Furthermore, using the monotonicity of $g_K$
proved in Lemma~\ref{lem:monotone}
and \eqref{eq:drec} we have

\begin{align*}
D_N(n) & < D_N(n-1) \text{ for } n \geq \floor{\beta N}+1,\\
D_N(n) & > D_N(n-1) \text{ for } n \leq \floor{\beta N}.
\end{align*}
Thus, the mode of the distribution $D_N$ is at $\floor{\beta N}$.
Now for any $\alpha > \beta$ we choose $\beta'$
such that $\alpha > \beta' > \beta$.
Hence, by the monotonicity of $g_K$ we have
$$r':=\frac{r}{g_K(\beta')} < \frac{r}{g_K(\beta)}=1.$$

Also using the monotonicity of $g_K$ and \eqref{eq:drec}
we have for any $j \geq 1$

\begin{align*}
D_N(\floor{\beta' N}+j) & \leq \brac{\frac{r}{g_K\brac{\frac{\floor{\beta'N}+1}{N}}}}^j D_N(\floor{\beta' N}) \\
& \leq (r')^j D_N(\floor{\beta' N}),
\end{align*}
where the last step follows since $\beta'N < \floor{\beta' N}+1$.
Hence, we have

\begin{align*}
E_N(\alpha)&=\sum_{t=0}^{\floor{\alpha N}-1} D_N(t)\\
				 &=1-\sum_{t=\floor{N\alpha}}^{N-1} D_N(t)\\
				 &\geq 1- D_N(\floor{\beta' N}) (r')^{\floor{\alpha N}-\floor{\beta' N}} \sum_{t=0}^{N-1-\floor{\alpha N}} (r')^t\\
				 &\geq 1- (r')^{\floor{\alpha N}-\floor{\beta' N}} \frac{1-(r')^{N-\floor{\alpha N}}}{1-r'}\\
				 & \to 1 \text{ as } N \to \infty.
\end{align*}
The proof for $\alpha < \beta$ follows similarly.

\section{Proof of Theorem~\ref{thm:majority_consensus}}
\label{pf:majority_consensus}

Let $T=T_0\wedge T_N$ denote the random time to reach 
consensus.
Then we have

\begin{align}
T&=\sum_{n=1}^{N-1}\sum_{j=1}^{Z_n} M_{n,j},
\end{align}
where $Z_n$ denotes the number of visits to state $n$ before absorption
and $M_{n,j}$ denotes the time spent in the $j^{\textrm{th}}$ visit to state $n$.
Clearly, the random variables $Z_n$ and $(M_{n,j})_{j\geq 1}$ are independent with each
$M_{n,j}$ being an exponential random variable with rate $q(n\to n+1)+q(n \to n-1)$.
Using Wald's identity we have

\begin{align}
t_{N}(\alpha)&=\e_{\floor{\alpha N}}\sbrac{T} \nonumber\\
					&=\sum_{n=1}^{N-1} \e_{\floor{\alpha N}}\sbrac{Z_n} \e_{\floor{\alpha N}}\sbrac{M_{n,j}} \nonumber\\
				    &=\sum_{n=1}^{N-1}\frac{\e_{\floor{\alpha N}}\sbrac{Z_n}}{(q(n\to n+1)+q(n \to n-1))}.  \label{eq:master_major}
\end{align}
Below we find lower and upper bounds of $t_N(\alpha)$.
Let $A=\cbrac{\omega: T_N(\omega) < T_0(\omega)}$ denote the event that the Markov chain
gets absorbed in state $N$. 
We have

\begin{align}
\expect[\floor{\alpha N}]{Z_n}&=\expect[\floor{\alpha N}]{Z_n \vert A} \prob[\floor{\alpha N}]{A}\nonumber\\
                     &\hspace{2em}+\expect[\floor{\alpha N}]{Z_n \vert A^c} (1-\prob[\floor{\alpha N}]{A}).
                     \label{eq:num_visits_break_major}
\end{align}

\noindent {\em Lower bound of $t_N(\alpha)$}: 
Applying Markov inequality to the RHS of \eqref{eq:uprate} and \eqref{eq:downrate}
we obtain
\begin{align*}
q(n \to n+1)+q(n \to n-1) 
&\leq (N-n)q_0\frac{2K \frac{n}{N}}{K+1}+nq_1\frac{2K \brac{1-\frac{n}{N}}}{K+1}\\
&= (q_0+q_1)\frac{2K}{K+1} \frac{N(N-n)}{N}.
\end{align*}
Furthermore, as in the case of voter model, we have
\begin{align*}
\expect[\floor{\alpha N}]{Z_n}& \geq  \prob[\floor{\alpha N}]{A}\indic{n \geq \floor{\alpha N}}+ (1-\prob[\floor{\alpha N}]{A})\indic{n \leq \floor{\alpha N}}.
\end{align*}
Using \eqref{eq:master_major} and the above inequalities we obtain
\begin{align*}
t_{N}(\alpha)&\geq \frac{\prob[\floor{\alpha N}]{A}}{q_0+q_1}\frac{K+1}{2K}\sum_{n=\floor{\alpha N}}^{N-1}\brac{\frac{1}{n}+\frac{1}{N-n}}\nonumber \\
						&+ \frac{1-\prob[\floor{\alpha N}]{A}}{q_0+q_1}\frac{K+1}{2K}\sum_{n=	1}^{\floor{\alpha N}}\brac{\frac{1}{n}+\frac{1}{N-n}}\\
					&\geq \frac{1}{q_0+q_1} \frac{K+1}{2K} \sum_{n=1}^{N(\alpha \wedge (1-\alpha))} \frac{1}{n}\\
					& > \frac{1}{q_0+q_1} \frac{K+1}{2K} \log(N(\alpha \wedge (1-\alpha))).
\end{align*}

\noindent {\em Upper bound for $t_{N}(\alpha)$}: 
From \eqref{eq:uprate} and \eqref{eq:downrate} we have

\begin{equation}
q(n \to n+1)+q(n \to n-1) \geq \begin{cases}
														c n, \quad \text{for } \frac{n}{N} \leq \frac{1}{2},\\
														c (N-n), \quad \text{for } \frac{n}{N} > \frac{1}{2},
													\end{cases}
\end{equation}
where $c=q_1 \prob{\text{Bin}\brac{2K,\frac{1}{2}}\geq K+1}$.
Using the above inequalities in \eqref{eq:master_major} we have

\begin{equation}
t_N(\alpha) \leq \sum_{n=1}^{\floor{N/2}} \frac{\expect[\floor{\alpha N}]{Z_n}}{c n}+ \sum_{n=\floor{N/2}+1}^{N-1} \frac{\expect[\floor{\alpha N}]{Z_n}}{c(N-n)}
\end{equation}
Hence, to show that $t_N(\alpha)=\Theta(\log N)$
it is sufficient to show that $\e_{\floor{\alpha N}}\sbrac{Z_n}=O(1)$ for 
all $1 \leq n \leq N-1$.

For the rest of the proof we assume $ \alpha > \beta$.
The case $\alpha < \beta$ can be handled similarly.

Let $x=\floor{\alpha N}$. We first find upper bound of $\expect[x]{Z_n;A}$.
Conditioned on $A$, the embedded chain 
$\tilde X^{(N)}$ is a Markov chain with jump
probabilities given by

\begin{equation}
p^{A}_{n,n+1}=1-p^{A}_{n,n-1}=p_{n,n+1}\frac{\prob[n+1]{A}}{\prob[n]{A}}.
\label{eq:cond_jump_major}
\end{equation}
We have 

\begin{align}
\frac{p^{A}_{n,n-1}}{p^{A}_{n,n+1}}&=\frac{p_{n,n-1}}{p_{n,n+1}} \frac{\prob[n-1]{A}}{\prob[n+1]{A}},\\
													   &\overset{(a)}{=}\frac{r}{g_K\brac{\frac{n}{N}}} \frac{\sum_{t=0}^{n-2} \prod_{j=1}^t \frac{r}{g_K(j/N)}}{\sum_{t=0}^{n} \prod_{j=1}^t \frac{r}{g_K(j/N)}},\\
													   & \overset{(b)}{\leq} \min\brac{1,\frac{r}{g_K\brac{\frac{n}{N}}}} \label{eq:ratio},
\end{align}
where equality (a) follows from \eqref{eq:pdef} and Theorem~\ref{thm:phase_transit}.
Inequality (b) follows from the facts (i) $\prob[n-1]{A} \leq \prob[n+1]{A}$ and (ii)
for a monotonically non-increasing non-negative sequence $(y_n)_{n\geq 1}$ the following inequality holds
$$y_n \frac{\sum_{t=0}^{n-2}\prod_{j=1}^{t} y_j}{\sum_{t=0}^{n}\prod_{j=1}^{t} y_j} \leq 1$$
(follows simply by comparing the terms in the numerator with the middle $n-1$ terms
in the denominator).

Given $A$,
let $\zeta_n$ denote the number of times the embedded chain $\tilde X^{(N)}$ jumps
from $n$ to $n-1$ before absorption. Then as in the voter model we have

\begin{equation}
\label{eq:num_visits_major}
Z_n\vert A=\begin{cases}
			1+\zeta_{n}+\zeta_{n+1},  \text{ for } x \leq n \leq N-1,\\
			\zeta_{n}+\zeta_{n+1},  \text{ for } 1 \leq n < x,
			\end{cases}
\end{equation}
%
where $\zeta_n$ follows the recursion

\begin{equation}
\label{eq:branch_major}
\zeta_{n}=\begin{cases}	
			\sum_{l=0}^{\zeta_{n+1}} \xi_{l,n},  \text{ for } x \leq n \leq N-1,\\
			\sum_{l=1}^{\zeta_{n+1}} \xi_{l,n},  \text{ for } 1 \leq n < x
			\end{cases}
\end{equation}
with $\zeta_N=0$ and
$\xi_{l,n}$ denoting the random number of left-jumps from state $n$ between 
$l^{\textrm{th}}$ and $(l+1)^{\textrm{th}}$ left-jumps from state $n+1$. Clearly 
$(\xi_{l,n})_{l\geq 0}$ are i.i.d  with geometric distribution having mean $p^{A}_{n,n-1}/p^{A}_{n,n+1}$.
Hence, applying Wald's identity to solve the above recursion we have

\begin{equation}
\label{eq:recurse_major}
\e_x\sbrac{\zeta_{n}}=\begin{cases}
			\sum_{t=n}^{N-1} \prod_{i=n}^{t} \frac{p_{i,i-1}^A}{p_{i,i+1}^A}, \quad \text{for } x \leq n \leq N-1\\
			\brac{\prod_{i=n}^{x-1} \frac{p_{i,i-1}^A}{p_{i,i+1}^A}}\e_x\sbrac{\zeta_{x}}, \quad \text{for } 1 \leq n < x.
			\end{cases}
\end{equation}
Now using inequality \eqref{eq:ratio}, monotonicity of $g_K$, and the fact that for $n \geq x=\floor{\alpha N} > \floor{\beta N}$,
$1 > r_\alpha:=r/g_K(\alpha) \geq r/g_K(n/N)$ we have for $n \geq x=\floor{\alpha N}$
\begin{equation}
\e_x\sbrac{\zeta_{n}} \leq r_\alpha+r_\alpha^2+\ldots+r_\alpha^{N-n} \leq \frac{r_\alpha}{1-r_\alpha}.
\label{eq:oneside_major}
\end{equation}
Hence, using \eqref{eq:num_visits_major} we have for $n \geq x=\floor{\alpha N}$

\begin{equation}
\expect[x]{Z_n;A} \leq \expect[x]{Z_n \vert A} \leq \frac{1+r_\alpha}{1-r_\alpha}=O(1).
\end{equation}
For $n < x=\floor{\alpha N}$ we have

\begin{equation}
\expect[x]{\zeta_n} \overset{(a)}{\leq} \expect[x]{\zeta_x} \overset{(b)}{\leq} \frac{r_\alpha}{1-r_\alpha},
\end{equation} 
where (a) follows from \eqref{eq:recurse_major} and \eqref{eq:ratio} and (b)
follows from \eqref{eq:oneside_major}.
Hence, from \eqref{eq:num_visits_major} we have  for $n < x=\floor{\alpha N}$

\begin{equation}
\expect[x]{Z_n;A} \leq \expect[x]{Z_n \vert A} \leq \frac{2r_\alpha}{1-r_\alpha}=O(1).
\end{equation}

Similarly, conditioned on $A^c$ we have

\begin{equation}
\label{eq:num_visits_reverse_major}
Z_n\vert A^c=\begin{cases}
			1+\bar \zeta_{n}+\bar \zeta_{n-1}  \text{ for } 1 \leq n \leq x,\\
			\bar \zeta_{n}+\bar \zeta_{n-1}  \text{ for } x <  n < N-1,
			\end{cases}
\end{equation}
where $\bar \zeta_n$ denotes the number of times 
$\tilde X^{(N)}$ jumps to the right from state $n$
given $A^c$. Hence, $\bar \zeta_n$ follows the recursion
given by

\begin{equation}
\label{eq:branch_reverse_major}
\bar \zeta_{n}=\begin{cases}	
			\sum_{l=0}^{\bar \zeta_{n-1}} \bar \xi_{l,n},  \text{ for } 1 \leq n \leq x,\\
			\sum_{l=1}^{\bar \zeta_{n-1}} \bar \xi_{l,n},  \text{ for } x <  n < N-1
			\end{cases}
\end{equation}
where $\bar \zeta_0=0$ and $\bar \xi_{l,n}$
denotes the random number of right-jumps from state $n$ between 
$l^{\textrm{th}}$ and $(l+1)^{\textrm{th}}$ right-jumps from state $n-1$ given $A^c$. 
Clearly $(\bar \xi_{l,n})_{l\geq 0}$ are i.i.d.  with geometric distribution having mean $p^{A^c}_{n,n+1}/p^{A^c}_{n,n-1}$
where 

\begin{equation}
p^{A^c}_{n,n+1}=1-p^{A^c}_{n,n-1}=p_{n,n+1}\frac{\prob[n+1]{A^c}}{\prob[n]{A^c}}.
\label{eq:cond_jump_reverse_major}
\end{equation}
As before, we have 

\begin{align}
\frac{p^{A^c}_{n,n+1}}{p^{A^c}_{n,n-1}}           &{=}\frac{g_K\brac{\frac{n}{N}}}{r} \frac{\sum_{t=n+1}^{N-1} \prod_{j=t+1}^{N-1} \frac{g_K(j/N)}{r}}{\sum_{t=n-1}^{N-1} \prod_{j=t+1}^{N-1} \frac{g_K(j/N)}{r}} {\leq} \min\brac{1,\frac{g_K\brac{\frac{n}{N}}}{r}} \label{eq:ratio_reverse},
\end{align}
Solving \eqref{eq:branch_reverse_major} using Wald's identity we obtain

\begin{equation}
\label{eq:recurse_reverse_major}
\e_x\sbrac{\bar \zeta_{n}}=\begin{cases}
			\sum_{t=1}^{n} \prod_{i=t}^{n} \frac{p_{i,i+1}^{A^c}}{p_{i,i-1}^{A^c}}, \quad \text{for } 1 \leq n \leq x,\\
			\brac{\prod_{i=x+1}^{n} \frac{p_{i,i+1}^{A^c}}{p_{i,i-1}^{A^c}}}\e_x\sbrac{\bar \zeta_{x}}, \quad \text{for } x <  n < N-1.
			\end{cases}
\end{equation}
For $1 \leq n \leq x$ after some simplification of \eqref{eq:recurse_reverse_major} we obtain

\begin{equation}
\e_x\sbrac{\bar \zeta_{n}}=\frac{\prod_{j=1}^{n} \frac{g_K(j/N)}{r}}{\prod_{j=1}^{N-1} \frac{g_K(j/N)}{r}} \prob[n+1]{A^{c}}\brac{1-\prob[n]{A^c}}.
\label{eq:simple_major}
\end{equation}
We observe that for $j \leq \floor{\beta N}$ we have $\frac{g_K(j/N)}{r} \leq 1$ and using the fact that $g_K(x)=1/g_K(1-x)$
we have $\prod_{j=1}^{N-1} \frac{g_K(j/N)}{r}=1/r^{N-1}$. Hence, using \eqref{eq:simple_major} for $n \leq \floor{\beta N}$
we have 

\begin{equation}
\e_x\sbrac{\bar \zeta_{n}} \leq \frac{\prod_{j=1}^{n} \frac{g_K(j/N)}{r}}{\prod_{j=1}^{N-1} \frac{g_K(j/N)}{r}} \leq r^{N-1} \leq 1.
\end{equation}

Furthermore, for $\floor{\beta N} < n \leq x$ we have 

\begin{equation}
\expect[x]{\bar \zeta_n} \leq \frac{1}{\prod_{j=n+1}^{N-1} \frac{g_K(j/N)}{r}} \overset{(a)}{\leq} 1,
\end{equation}
where (a) follows from the fact that $\frac{g_K(j/N)}{r} \geq 1$ for $j > n > \floor{\beta N}$.
Hence, we have shown that $\expect[x]{\bar \zeta_n}=O(1)$ for $1 \leq n \leq x$.
Now, using \eqref{eq:recurse_reverse_major} and inequality \eqref{eq:ratio_reverse} we have for $x < n < N-1$ that
$\expect[x]{\bar \zeta_n} \leq \expect[x]{\bar \zeta_x}=O(1)$.
Hence, from \eqref{eq:num_visits_reverse_major} we see that $\expect[x]{Z_n;A^c}\leq \expect[x]{Z_n\vert A^c} =O(1)$
thereby completing the proof.

\section{Conclusion}
\label{sec:opinion_conclusion}

In this paper, we analysed the voter model
the majority rule model of social interaction
under the presence of biased and stubborn agents.
We observed that for the voter model
the presence of biased agents, reduces the mean consensus
time exponentially in comparison to the voter model
with unbiased agents. For the majority rule model with biased agents, we 
saw that the network reaches the consensus state with all agents
adopting the preferred opinion only if the initial fraction of agents
having the preferred opinion is more than a certain threshold value.
Finally, we have seen that 
for the majority rule model with stubborn agents
the network exhibits metastability, where it
fluctuates between multiple stable configuration,
spending long intervals in each configuration.

Several interesting directions for future work exist.
For example, the behaviour of random $d$-regular networks
under the biased voter and majority rule models has not been analysed yet.
Furthermore, the effect of the presence of more than two opinion on the opinion
dynamics is unknown. It will be also interesting to study the networks dynamics
under the majority rule model for general network topologies when stubborn agents are present.

\begin{acknowledgements}

RR acknowledges support from the University of Waterloo during various visits and also support from the Matrics grant MTR/2017/000141.

\end{acknowledgements}

%
%

\bibliographystyle{spmpsci}      
\bibliography{load_balance,arpan_thesis}   

\end{document}